\theoremstyle{plain}
\newtheorem{thm}{Theorem}[section]
\newtheorem{theorem}{Theorem}[section]
\newtheorem{corollary}[thm]{Corollary}
\newtheorem{prop}[thm]{Proposition}
\newtheorem{proposition}[thm]{Proposition}
\theoremstyle{definition}
\newtheorem{defn}{Definition}[section]
\newtheorem{example}[defn]{Example}
\theoremstyle{remark}
\numberwithin{equation}{section}
\numberwithin{figure}{section}
 \DeclareMathOperator{\im}{Im}
\DeclareMathOperator*{\res}{\mathrm{Res}}
\def\D{{\mathbb D}}
\def\R{{\mathbb R}}
\def\C{{\mathbb C}}
\def\N{{\mathbb N}}
\def\ol{\overline}
\begin{document}

\title{Finite term relations for the exponential orthogonal polynomials}

\author{
Bj\"orn Gustafsson\textsuperscript{1},
Mihai Putinar\textsuperscript{2}\\
}

\maketitle

\begin{abstract} The exponential orthogonal polynomials encode via the theory of hyponormal operators a shade function $g$ supported by a bounded planar shape. We prove under natural regularity assumptions that these complex polynomials satisfy a three term relation if and only if the underlying shape is an ellipse carrying uniform black on white. More generally, we show that a finite term relation among these orthogonal polynomials holds if and only if the first row in the associated Hessenberg matrix has finite support. This rigidity phenomenon is in sharp contrast with the theory of classical complex orthogonal polynomials. On function theory side, we offer an effective way based on the Cauchy transforms of $g, \ol{z}g, \ldots, \ol{z}^dg$, to decide whether a $(d+2)$-term relation among the exponential orthogonal polynomials exists; in that case we indicate how the shade function $g$ can be reconstructed from a resulting polynomial of degree $d$ and the Cauchy transform of $g$. A discussion of the relevance of the main concepts in Hele-Shaw dynamics completes the article.

\end{abstract}

\noindent {\it Keywords: Complex orthogonal polynomials, exponential transform, finite term relation, hyponormal operator, quadrature domain} 

\noindent {\it MSC Classification: 33C47, 32A26, 47B20, 47B35}

 \footnotetext[1]
{Department of Mathematics, KTH, 100 44, Stockholm, Sweden.\\
Email: \tt{gbjorn@kth.se}}
\footnotetext[2]
{Department of Mathematics, UCSB, Santa Barbara, CA 93106-3080, USA and School of Mathematics, Statistics and Physics,
Newcastle University, Newcastle upon Tyne, NE1 7RU, UK.\\
Email: \tt{mputinar@math.ucsb.edu, mihai.putinar@ncl.ac.uk}}

\section{Introduction} In contrast to the unanimously accepted conventions in the theory of orthogonal polynomials on the line or on the circle, the multivariate setting
is asking for additional, rather essential choices, such as working with complex variables rather than real ones, ordering the monomials, smoothness properties of the
generating measures, (re)normalizations in the asymptotic analysis. It is well known for instance that the finite central truncation approach (of Jacobi matrices) to orthogonal polynomials
encounters serious complications when passing to several variables, see for instance \cite{DXu}. These difficulties are mostly related to the non-commutativity of the finite rank 
compressions of commuting, infinite (Jacobi type) matrices.

A rather specialized setting, relevant for instance for image processing, is offered by the theory of hyponormal operators. More specifically, a spectral parameter known as the principal function
of a hyponormal operator, turns the moments of a``shade function" depending of two real variables into a meaningful and accessible Hilbert space operator. The associated complex orthogonal polynomials, called 
in \cite{GP} and henceforth the {\it exponential orthogonal polynomials}, reveal in algebraic terms the geometry of the generating shaded shape. For instance, the analogs of finite collections of intervals on the line
or on the circle are quadrature domains for complex analytic functions in the plane. The analogy to the one dimensional classical theory, and especially to Markov's exponential transform of the generating function
of a moment sequence, goes quite far. The recent lecture notes \cite{GP} contain a detailed account of the qualitative features of exponential orthogonal polynomials and the necessary Hilbert space or
potential theory background.

The present note is aimed at proving a notable rigidity of the Hessenberg matrix associated to a system of exponential orthogonal polynomials. Namely, assuming a necessary completeness of the exponential orthogonal polynomials, a finite number of non-zero entries on the first
row propagates to the whole matrix and produces a finite banded matrix with one non-zero sub-diagonal. Finite banded matrices have been intensively studied for a long time, both for theoretical features as
well as for their relevance in numerical linear algebra, see for instance \cite{BG}.

Moreover, if only the first two elements of the first row of Hessenberg's matrix are non-zero and the exponential orthogonal polynomials are complete, then
the whole matrix is tridiagonal and the associated shaded shape is a an ellipse (black on white). This observation complements existing 
similar results for classical complex orthogonal polynomials \cite{D,MA,PS,Sz}. 

In the finite term relation scenario, the exponential transform of the generating function of the power moments of a shade function $g$ satisfies a characteristic vanishing formula. More specifically, the
Taylor coefficients at infinity of this transform reveal elementary recursion relations (at most quadratic) which populate the entire double series from its marginals. As a byproduct, we indicate an algorithmic approach to determine from the Cauchy transforms of $g, \ol{z} g, \ldots, \ol{z}^d g$ the existence of a finite term relation for the associated exponential orthogonal polynomials, and if so, how to reconstruct $g$.

While the main body of the present work is presented from a stationary point of view, a natural question raised in the last section addresses the evolution of shade functions (possibly carrying a finite term relation) under Hele-Shaw flow. The better understood case of families of confocal ellipses offers a promising base to start such a quest.

\tableofcontents


\section{Preliminaries}\label{sec:preliminaries} 

The starting point of the dictionary between shade functions in the plane and Hilbert space operators is the following double Cauchy transform positivity feature. 
Let $g :\C \longrightarrow [0,1]$ be a Borel measurable function with compact support, and denote by 
$$ 
a_{jk} =\frac{1}{\pi} \int_\C z^j \ol{z}^k g(z) dA, \ \ j,k \geq 0,
$$
its moments. Above $dA$ stands for Lebesgue measure in the plane.
The double Cauchy transform
$$ 
-\frac{1}{\pi} \int_\C \frac{ g(\zeta) dA(\zeta)}{(\zeta-z) (\ol{\zeta}-\ol{w})} 
$$
expands in the convergent generating series in a neighborhood of infinity:
$$ 
- \sum_{j,k=0}^\infty \frac{a_{jk}}{z^{j+1} \ol{w}^{k+1}}.
$$
The exponential of this series, called the ``exponential transform" of the shade function $g$ has the remarkable positivity property of 
factoring through a Hilbert space $H$ and a linear bounded operator $T$ acting on $H$:
$$ 
E_g(z,w) = \exp [-\frac{1}{\pi} \int_\C \frac{ g(\zeta) dA(\zeta)}{(\zeta-z) (\ol{\zeta}-\ol{w})} ] = 1- \langle (T^\ast - \ol{w})^{-1} \xi, (T^\ast - \ol{z})^{-1} \xi \rangle.
$$
The latter identity extends from a neighborhood of infinity to the whole $\C^2$, with proper definitions of the localized resolvents. In this factorization the operator
$T$ is almost normal, in the precise sense that its self-commutator is small, but not zero:
\begin{equation}\label{TstarT}
[T^\ast, T] = \xi \langle \cdot, \xi \rangle =: \xi \otimes \xi.
\end{equation}
And vice-versa, an irreducible operator with non-negative self-commutator of rank one produces by the above exponential transform a shade function $g$, called the principal function.
For all aspects of the theory of hyponormal operators invoked in this article we refer to \cite{GP} and the bibliographical references therein.

The {\it exponential orthogonal polynomials} are trimmed to the Hilbert space realization rather than the function theoretic setting. Specifically, $P_k(z)$ is the unique polynomial of degree $k$
with positive leading coefficient which satisfies the state space orthogonality 
$$ 
\langle P_k(T^\ast)\xi, T^{\ast j}\xi \rangle = 0, \ \ j < k,
$$
and the normalization 
$$ 
\| P_k(T^\ast)\xi \| = 1, \ \ k \geq 0.
$$
This Krylov subspace type scheme goes through to any integer $k$ if and only if
the filtration of cyclic subspaces
\begin{equation}\label{Hk}
H_k = {\rm span}\{ \xi, T^\ast \xi, \ldots, T^{\ast k} \xi \}, \ k \geq 0,
\end{equation}
is not stationary.

The stationary case is interesting, and was studied in full detail before, being represented by black and white shade functions $g = \chi_\Omega$, where $\Omega$ is a quadrature domain for analytic functions in the complex plane, \cite{GP}.

All these being said, one can avoid the rather abstract operator $T$ and define a multiplier on a functional model space involving only Cauchy transforms and multiplier operations, see \cite{GP}
or the recent note \cite{GP-resolvent}. 


\section{Completeness of exponential orthogonal polynomials}\label{sec:completeness} 

We use the notation introduced in Section~\ref{sec:preliminaries}: $T \in L(H)$ is a pure hyponormal operator with rank-one self-commutator, $P_k(z)$
are the corresponding exponential orthogonal polynomials. In addition we let $\mathcal H$ represent the Hessenberg matrix associated to the multiplier $M_z$ with respect to the orthonormal system $P_k(z)$. 
Note that $P_k$ is defined without ambiguity if and only if the space $H_{k-1}$ (see (\ref{Hk})) is strictly contained in $H_k$. 
If this is not the case, that is $H_{k-1} = H_k$, then the spectrum of the operator $T$ is the closure of a quadrature domain (or open set) $\Omega$ of order less than or equal to $k$, 
carrying a ``maximal" principal function $g_T = \chi_\Omega dA$; moreover in this case $P_k$ makes no sense. 
We will carry this dichotomy, namely either $H_{k-1} \neq H_k$ for  all $k \geq 1$, or there exists $d\geq 1$, 
minimal with the property  $H_{d-1} = H_{d}$. In either situation we will speak about the exponential orthogonal polynomials $P_k$,
defined only for all  degrees $k \geq 0$, respectively only for  $0\leq k \leq d-1$. Correspondingly, the Hessenberg matrix $\mathcal H$ will be infinite or finite. 
More specifically, ${\mathcal H} = (h_{jk})$, where
$$ 
h_{jk} = \langle zP_k, P_j \rangle,
$$
with $j,k \geq 0$, respectively $0 \leq j,k \leq d-1$.
Note that the matrix $\mathcal H$ has only a first non-zero subdiagonal:
$$  
{\mathcal H}  = \left( \begin{array}{cccccc}
                                              h_{00} & h_{01} & h_{02} & h_{03} & h_{04} & \ldots\\
                                              h_{10} & h_{11} & h_{12} & h_{13} & h_{14} & \ldots\\
                                              0 & h_{21} & h_{22} & h_{23} & h_{24} & \ldots\\
                                              0& 0 & h_{32} & h_{33} & h_{34} & \ldots\\
                                              \vdots & & \vdots & & \ddots & \ddots\\
                                              \end{array} \right).
$$

 Let $H_{\rm pol}$ denote the closed linear subspace of $H$ generated by the orthonormal system $e_k = P_k(T^\ast)\xi$, $k \geq 0$:
$$
H_{\rm pol}= \bigvee_{k \geq 0} P_k(T^{\ast})\xi= \bigvee_{k \geq 0} T^{\ast k}\xi.
$$
 We may call $H_{\rm pol}$ the closure of exponential polynomials in the
 underlying Hilbert space $H$.  This subspace might be smaller than $H$, even finite dimensional, yet it carries full information on the operator $T$, and hence on the  associated shade function $g_T$.
 Indeed, the machinery of the exponential transform carries bijectively the Gram matrix data
 $$ 
b_{\ell,k} = \langle T^{\ast k}\xi, T^{\ast \ell}\xi \rangle, \ \ k,\ell \geq 0,
$$
 to the $(z,\ol{z})$-moments of $g_T$. One step further, standard linear algebra (LU factorization of the inverse Gram matrix) produces from $(b_{\ell,k})$ the coefficients of the orthogonal polynomials
 $P_k$ (for all degrees they are well defined) and finally the ``shift" matrix $\mathcal H$ together with the length of the vector $\xi = \| \xi\| e_0$ determine the Gram matrix, and hence $T$. We do not expand here these known details, see for instance \cite{B}.
 
The sub-diagonal in the Hessenberg matrix is of particular interest, at least in the general theory of orthogonal polynomials. We confine ourselves to note the simple dependence of its entries and the leading coefficients of the orthogonal polynomials. Specifically, if
$$ P_n(z) = \gamma_n z^n + \ldots,$$
where $\gamma_n >0, \ n \geq 0,$ then
$$ h_{n+1,n} = \langle z P_n(z), P_{n+1}(z) \rangle = \langle  \gamma_n z^{n+1} + \ldots , P_{n+1}(z) \rangle = \frac{\gamma_n}{\gamma_{n+1}}.$$
Recall also Christoffel function type interpretation
$$ \frac{1}{\gamma_n} = {\rm dist}(z^n, H_{n-1}),$$
which is the starting point of asymptotic formulas in the classical situation.
 
 As a conclusion of this general discussion, we emphasize the following rather striking phenomenon specific to our framework. Let $\pi : H \longrightarrow H_{\rm pol}$ denote the orthogonal projection. The operator $T^\ast$ leaves invariant the subspace 
 $H_{\rm pol}$, hence $T^\ast \pi = \pi T^\ast \pi = {\mathcal H}$. In matrix form, we obtain the block decomposition:
$$ 
T =  \left( \begin{array}{cc}
                {\mathcal H}^\ast & 0 \\
                B & C \\
               \end{array} \right).
$$   
The upper-left corner ${\mathcal H}^\ast $ determines the whole operator $T$ in a rather constructive manner (see the stair-case block-matrix structure discussed in \cite{GP}
for the case of quadrature domains, that is $\dim H_{\rm pol} < \infty$). In general the recursive procedure to compute all moments 
$$ 
\langle T^p T^{\ast q} \xi, T^k T^{\ast \ell}\xi \rangle, \ \ p,q,\ell,k \geq 0,
$$
from the compressed data
$$
 b_{k,\ell} = \langle T^{\ast \ell} \xi, T^{\ast k}\xi \rangle, \ \ k, \ell \geq 0,
$$
is derivable from the basic commutation relation $T^\ast T = T T^\ast + \xi \otimes \xi$. See for details Section XII.3 in \cite{MP}.

From these observations we derive a simple quantitative criterion for the completeness of exponential orthogonal polynomials, 
that is for the equality $H_{\rm pol} = H$ to hold.

\begin{theorem} 
Let $T \in L(H)$ be an irreducible hyponormal operator with rank-one self-commutator and associated shade function $g$.
   Let $P_k(z)$ denote the associated exponential orthogonal polynomials
   and let ${\mathcal H} = (h_{jk})$ be the associated Hessenberg matrix (finite or not).
   
   The system of exponential orthogonal polynomials is complete if and only if one of the following conditions is satisfied:
   \begin{equation}\label{L2}
 \sum_{k\geq 1} |h_{0k}|^2 - |h_{10}|^2 = \frac{\| g \|_1}{\pi},
\end{equation}
or equivalently
\begin{equation}\label{Txi}
T\xi \in H_{\rm pol}.
\end{equation}
\end{theorem}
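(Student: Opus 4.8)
The plan is to prove the equivalence $(\ref{Txi}) \Leftrightarrow H_{\rm pol} = H$ first, and then to show that $(\ref{Txi})$ is equivalent to the numerical identity $(\ref{L2})$. For the first equivalence, the key observation is the block structure $T = \left(\begin{smallmatrix} {\mathcal H}^\ast & 0 \\ B & C\end{smallmatrix}\right)$ recorded above: since $T^\ast$ leaves $H_{\rm pol}$ invariant, $H_{\rm pol}$ is a reducing-or-invariant subspace, and its orthogonal complement $H_{\rm pol}^\perp$ is $T$-invariant with $T|_{H_{\rm pol}^\perp} = C$. By irreducibility of $T$, the only way $H_{\rm pol}$ can fail to be all of $H$ is if $H_{\rm pol}^\perp$ is nonzero and \emph{not} reducing, i.e. if $T$ does not leave $H_{\rm pol}$ invariant, which happens precisely when $T\xi \notin H_{\rm pol}$ (note $T^{\ast k}\xi \in H_{\rm pol}$ for all $k$, so the only generator of the cyclic space for the pair $(T,T^\ast)$ that can escape $H_{\rm pol}$ is $T\xi$ itself). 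Concretely: if $T\xi \in H_{\rm pol}$ then, using $T^\ast T = TT^\ast + \xi\otimes\xi$ and induction on word length, every vector $T^{p}T^{\ast q}\xi$ lies in $H_{\rm pol}$; since $T$ is irreducible (hence has no proper reducing subspace and the smallest such joint-invariant space is $H$), this forces $H_{\rm pol} = H$. Conversely if $H_{\rm pol} = H$ then trivially $T\xi \in H_{\rm pol}$.

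For the numerical criterion, I would compute $\|T\xi\|^2$ two ways and compare with $\|\pi T\xi\|^2$, where $\pi$ is the projection onto $H_{\rm pol}$; then $(\ref{Txi})$ holds iff $\|\pi T\xi\|^2 = \|T\xi\|^2$. On one hand the self-commutator relation $(\ref{TstarT})$ gives
\begin{equation*}
\|T\xi\|^2 = \langle T\xi, T\xi\rangle = \langle T^\ast T\xi,\xi\rangle = \langle TT^\ast\xi,\xi\rangle + \langle \xi,\xi\rangle\langle\xi,\xi\rangle = \|T^\ast\xi\|^2 + \|\xi\|^4.
\end{equation*}
Here $\|\xi\|^2 = \|g\|_1/\pi$ by the normalization $\|P_0(T^\ast)\xi\| = 1$ together with $a_{00} = \|g\|_1/\pi$, so $\xi = \|\xi\| e_0$ with $\|\xi\|^2 = a_{00}$. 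On the other hand, expand $\pi T\xi$ in the orthonormal basis $\{e_k\}_{k\ge 0}$ of $H_{\rm pol}$: since $\langle T\xi, e_k\rangle = \langle T\xi, P_k(T^\ast)\xi\rangle = \overline{\langle P_k(T^\ast)\xi, T\xi\rangle} = \overline{\langle T^\ast P_k(T^\ast)\xi, \xi\rangle}$, and because $T^\ast e_0 = T^\ast\xi/\|\xi\|$ while $\xi = \|\xi\| e_0$, these inner products are exactly $\|\xi\|$ times the conjugates of the entries $h_{0k} = \langle zP_k, P_0\rangle$ of the first row of $\mathcal H$ — more precisely $\langle T\xi, e_k\rangle = \|\xi\|\,\overline{h_{0k}}$. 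Hence $\|\pi T\xi\|^2 = \|\xi\|^2 \sum_{k\ge 0}|h_{0k}|^2$. Meanwhile $\|T^\ast\xi\|^2 = \|\xi\|^2\|T^\ast e_0\|^2 = \|\xi\|^2\sum_{j\ge 0}|\langle T^\ast e_0, e_j\rangle|^2 = \|\xi\|^2\sum_{j\ge 0}|h_{j0}|^2 = \|\xi\|^2(|h_{00}|^2 + |h_{10}|^2)$, using that the first column of $\mathcal H$ has only two nonzero entries. Substituting into $\|\pi T\xi\|^2 = \|T\xi\|^2$ and dividing by $\|\xi\|^2 = \|g\|_1/\pi$ yields $\sum_{k\ge 0}|h_{0k}|^2 = |h_{00}|^2 + |h_{10}|^2 + \|g\|_1/\pi$, i.e. $\sum_{k\ge 1}|h_{0k}|^2 - |h_{10}|^2 = \|g\|_1/\pi$, which is $(\ref{L2})$.

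The main obstacle I anticipate is bookkeeping the normalization constants and the localization of the resolvents — in particular verifying cleanly that $\langle T\xi, e_k\rangle = \|\xi\|\,\overline{h_{0k}}$ and that $T^\ast e_0$ has only the two components $h_{00}e_0 + h_{10}e_1$, which rests on $e_0 = \xi/\|\xi\|$ and the Hessenberg (one-subdiagonal) shape of $\mathcal H$. A secondary point requiring care is the convergence of the series $\sum_k |h_{0k}|^2$: a priori $T\xi\in H$ so $\pi T\xi\in H_{\rm pol}$ always has finite norm, hence the series converges unconditionally, but one should remark that $(\ref{L2})$ is meaningful even in the finite Hessenberg case (where it reduces to a finite sum and the identity still records $\|\pi T\xi\|^2 = \|T\xi\|^2$). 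Finally, the irreducibility hypothesis must be invoked exactly once — to pass from ``$T$-invariance of $H_{\rm pol}$'' to ``$H_{\rm pol} = H$'' — and I would isolate that implication as the conceptual heart of the argument, the rest being the Hilbert-space identity above.
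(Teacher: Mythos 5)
Your proposal is correct and takes essentially the same route as the paper: the same commutator-induction plus irreducibility argument showing that $T\xi\in H_{\rm pol}$ forces $H_{\rm pol}=H$, and the same two-fold evaluation of $\|T\xi\|^2$ (once via the self-commutator identity, once via the first row and first column of $\mathcal{H}$, together with $\|\xi\|^2=\|g\|_1/\pi$). The only difference is cosmetic bookkeeping, working with $\xi$ instead of $e_0=\xi/\|\xi\|$.
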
                           

\begin{proof} We first claim that the closure of polynomials $H_{\rm pol} = \bigvee_{k \geq 0} T^{\ast k}\xi$ is equal to the full space $H$ if and only if
(\ref{Txi}) holds.
Indeed, assuming $T\xi \in H_{\rm pol}$, we infer by induction from the commutation relation $T T^\ast  =  T^\ast T- \xi \otimes \xi$ that
$T T^{\ast k} \xi \in H_{\rm pol}$ for all $k \geq 0$ (see equation (\ref{commutator}) below for some details). 
But his means that $H_{\rm pol}$ is a reducing subspace for the irreducible operator $T$, hence $H_{\rm pol} = H$.

Next, $T \xi \in H_{\rm pol}$ if and only if the orthogonal projection $\pi$ onto $H_{\rm pol}$ satisfies
$$ 
\pi T \xi = T \xi,
$$
or equivalently
$$ 
\| T \xi \| = \| \pi T \xi \| = \| {\mathcal H}^\ast \xi \|.
$$
Let $e_0 = \xi/\| \xi \|$ be the first orthonormal vector in the system associated to the exponential orthogonal polynomials.
The last equality becomes:
$$
\| T e_0 \|^2 = \| {\mathcal H}^\ast e_0 \|^2 = \sum_{k \geq 0} |h_{0k}|^2. 
$$
On the other hand the commutation relation $T^\ast T = T T^\ast + \xi \otimes \xi$ implies 
$$ 
\| T e_0 \|^2 = \| T^\ast e_0 \|^2 + |\langle e_0, \xi \rangle|^2 = |h_{00}|^2 + |h_{10}|^2 + \| \xi \|^2.
$$
Finally, the very definition of the exponential transform and its factorization through the resolvent of the operator $T^\ast$ yield:
$$
 \| \xi \|^2 = \frac{1}{\pi}\int_{\C} g dA.
$$
By putting together these computations we obtain the equivalence in the statement.
\end{proof}

\begin{corollary} 
The first row and first column in the Hessenberg matrix attached to any irreducible hyponormal operator with rank-one self-commutator and associated shade function $g$
satisfy the inequality:
$$  
\sum_{k\geq 1} |h_{0k}|^2 - |h_{10}|^2 \leq \frac{\| g \|_1}{\pi}.
$$
\end{corollary}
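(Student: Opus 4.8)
The plan is to recognize the asserted bound as the ``incomplete'' shadow of the identity $(\ref{L2})$: in the proof of the theorem the equality $\|Te_0\| = \|\pi Te_0\|$ was the precise translation of completeness, whereas here I would run the very same chain of computations with the unconditional estimate $\|\pi Te_0\| \le \|Te_0\|$ in place of that equality, this working equally well whether the Hessenberg matrix is finite or infinite.

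Concretely, I would first record that $H_{\rm pol}= \bigvee_{k\ge 0}T^{\ast k}\xi$ is invariant under $T^\ast$, so that adjoining $T^\ast \pi = \pi T^\ast \pi = \mathcal H$ gives $\pi T = \pi T\pi = \mathcal H^\ast$; hence, with $e_0 = \xi/\|\xi\|$,
$$ \|\pi T e_0\|^2 = \|\mathcal H^\ast e_0\|^2 = \sum_{k\ge 0}|h_{0k}|^2 .$$
Next I would evaluate $\|Te_0\|^2$ exactly as in the theorem: the commutation relation $T^\ast T = TT^\ast + \xi\otimes\xi$ gives $\|Te_0\|^2 = \|T^\ast e_0\|^2 + \|\xi\|^2$; the single non-zero subdiagonal of $\mathcal H$ forces $\|T^\ast e_0\|^2 = |h_{00}|^2 + |h_{10}|^2$; and the factorization of the exponential transform through the resolvent of $T^\ast$ yields $\|\xi\|^2 = \frac1\pi\int_\C g\,dA$.

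Combining these with $\|\pi Te_0\|\le\|Te_0\|$ gives $\sum_{k\ge 0}|h_{0k}|^2 \le |h_{00}|^2 + |h_{10}|^2 + \|g\|_1/\pi$, and cancelling $|h_{00}|^2$ on both sides produces the claimed inequality. I do not expect a genuine obstacle here; the only points deserving care are the bookkeeping of which entries of $\mathcal H$ and $\mathcal H^\ast$ survive in $T^\ast e_0$ and $\pi T e_0$, and the observation that equality is attained exactly when $Te_0\in H_{\rm pol}$, i.e. precisely in the complete case, in agreement with the theorem.
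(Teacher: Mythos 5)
Your argument is correct and coincides with the paper's: the Corollary is proved there by exactly this observation, namely that replacing the equality $\|\pi T\xi\|=\|T\xi\|$ in the Theorem's computation by the unconditional bound $\|\pi T\xi\|\leq\|T\xi\|$ turns the completeness identity (\ref{L2}) into the stated inequality. Your bookkeeping of $\|\pi Te_0\|^2=\sum_{k\geq 0}|h_{0k}|^2$ and $\|Te_0\|^2=|h_{00}|^2+|h_{10}|^2+\|\xi\|^2$ matches the paper's, including the remark on when equality holds.
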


For the proof it suffices to note that 
$$
 \| \pi T \xi \| \leq \| T \xi \|.
$$

Besides the analytic criterion offered by the theorem above, we can derive in purely geometric terms sufficient conditions for the completeness of exponential orthogonal polynomials.
\begin{proposition}\label{prop:blackwhite} 
Let $g = \chi_\sigma$ be a black and white shade function of compact support $\sigma$. The associated exponential orthogonal polynomials are complete
if the complement of $\sigma$ is connected and either ${\rm int\,} \sigma = \emptyset$ or $\partial \sigma$ is real analytic smooth and ${\rm int\,} \sigma$ is not a quadrature domain.
\end{proposition}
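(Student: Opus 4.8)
The plan is to combine the operator criterion just proved with the fact that $H_{\rm pol}$ is invariant not only under $T^{\ast}$ but, as a by‑product, under the whole rational calculus of $T$, and then to let the geometry of $\sigma$ finish the job.

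First I would reduce the statement. By the theorem, completeness is equivalent to $T\xi\in H_{\rm pol}$, which — as its proof shows — forces $H_{\rm pol}$ to be reducing for $T$, hence equal to $H$ since $T$ is irreducible. So it is enough to show that the defect space $N:=H_{\rm pol}^{\perp}$ is trivial; assume $N\neq\{0\}$. Since $H_{\rm pol}$ is $T^{\ast}$-invariant, $N$ is $T$-invariant; and for $x\in N$ the $H$-valued function $z\mapsto(T-z)^{-1}x$ is holomorphic off the spectrum of $T$, and near $\infty$ its expansion in $1/z$ has coefficients $-\langle T^{k}x,\xi\rangle=-\langle x,T^{\ast k}\xi\rangle=0$, so it is orthogonal to $\xi$ there, hence — running the same computation with $T^{j}(T-z)^{-1}x$ — orthogonal to all of $H_{\rm pol}$ throughout the connected set outside the spectrum; thus $N$ is invariant under every resolvent $(T-z)^{-1}$ of $T$. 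I also record that the spectrum of $T$ is the polynomially convex hull of $\supp g=\overline{\sigma}$, so its complement is connected, and that $C:=T|_{N}$ is hyponormal with $[C^{\ast},C]=BB^{\ast}$ (compare the lower–right blocks of $[T^{\ast},T]=\xi\otimes\xi$, using $\xi\in H_{\rm pol}$ and the off‑diagonal block $B$ of $T$ displayed above); moreover $C$ is \emph{non-normal}, else $B=0$, $T$ is block‑diagonal, $N$ reduces $T$, and $C$ is a normal summand, against purity. Note that ``$C$ hyponormal, non-normal, spectrum inside $\overline\sigma$'' is by itself not contradictory — it holds, e.g., for the operator attached to a fat Cantor set — so the geometry of $\sigma$ must enter.

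If $\mathrm{int}\,\sigma=\emptyset$, then the spectrum of $T$, being the polynomially convex hull of the nowhere dense set $\overline{\sigma}$, has empty interior, so by Lavrentiev's theorem rational functions with poles off it are uniformly dense in the continuous functions on it; fed into the (sub)scalar functional calculus carried by the hyponormal operator $T$ (see \cite{GP}), this shows that $N$, already invariant under all resolvents of $T$, is invariant under $T^{\ast}$ as well; hence $N$ reduces $T$, and since $0\neq\xi\in N^{\perp}$ we get $N=\{0\}$. If instead $\partial\sigma$ is real-analytic and $\Omega:=\mathrm{int}\,\sigma$ is not a quadrature domain, I would pass to the functional model of $T$ attached to the exponential transform $E_{g}$ over $\Omega$ (\cite{GP}); there $H_{\rm pol}$ is the closed span of the anti‑holomorphic family $z\mapsto(T^{\ast}-\overline{z})^{-1}\xi$ for $z$ outside $\overline{\Omega}$ (continued from a neighbourhood of $\infty$, where it equals $-\sum_{k\geq 0}(T^{\ast})^{k}\xi\,\overline{z}^{-k-1}$), and $N\neq\{0\}$ says this exterior family is not total in $H$. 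Real-analyticity of $\partial\Omega$ makes the Schwarz function $S$ of $\partial\Omega$ (with $S=\overline{z}$ on $\partial\Omega$) continue holomorphically into a collar of $\partial\Omega$ inside $\Omega$; tracking the vectors missed by the exterior family through the model — the localized resolvents of $T^{\ast}$ and the Cauchy transforms of $g,\overline{z}g,\dots$ — forces $S$ to continue meromorphically to all of $\Omega$, i.e. $\Omega$ is a quadrature domain, contrary to hypothesis. In either case $N=\{0\}$, so the exponential orthogonal polynomials are complete; equivalently, the inequality of the corollary above becomes an equality.

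The main obstacle is the real-analytic case: turning ``$H_{\rm pol}\neq H$'' into ``$\Omega$ is a quadrature domain'' requires the precise functional-model description of the pair $(T,\xi)$ — the localized resolvents, the Cauchy‑transform data, and the rational‑versus‑transcendental dichotomy for $E_{g}$ — so that non‑totality of the exterior resolvent family becomes an analytic-continuation statement for $S$, with real-analyticity of $\partial\sigma$ supplying the germ from which that continuation starts. The case $\mathrm{int}\,\sigma=\emptyset$ is technically lighter, but hinges on the companion input that, on a spectrum with empty interior, the rational calculus of $T$ already ``sees'' $T^{\ast}$.
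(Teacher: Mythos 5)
Your proposal does not close either case; both halves rest on steps that are asserted rather than proved, and the first one as stated cannot be repaired in the form you give it. In the case ${\rm int\,}\sigma=\emptyset$ you want to transfer the invariance of $N=H_{\rm pol}^{\perp}$ under the rational calculus of $T$ to invariance under $T^{\ast}$ by uniformly approximating $\ol{z}$ on the spectrum (Lavrentiev) and ``feeding this into the functional calculus''. To preserve invariance of a closed subspace you need the approximants $r_n(T)$ to converge to $T^{\ast}$ at least strongly; but then $r_n(T)T=Tr_n(T)$ passes to the limit and gives $[T^{\ast},T]=0$, i.e. $T$ normal, contradicting purity. Uniform approximation of the symbol $\ol z$ on $\sigma(T)$ says nothing about the operator $T^{\ast}$ for a non-normal $T$, and the subscalar extension lives on a larger space, so it does not supply the missing convergence. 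The paper's mechanism is different and is the actual content of this case: the localized resolvent $(T^{\ast}-\ol{z})^{-1}\xi$ is weakly continuous on \emph{all} of $\C$ (a special feature of rank-one self-commutator), so the scalar function $\alpha(z)=\langle u,(T^{\ast}-\ol{z})^{-1}\xi\rangle$, which vanishes on the connected set $\C\setminus\sigma$ by the Neumann series at infinity, vanishes identically by continuity when $\sigma$ has empty interior; totality of the localized resolvents then forces $u=0$.

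In the real-analytic case the decisive implication --- that non-totality of the exterior resolvent family forces the Schwarz function to continue meromorphically through all of $\Omega$ --- is exactly the statement to be proved, and you explicitly flag it as ``the main obstacle'' without supplying an argument. The paper does not prove any global continuation of $S$; it works instead with the same scalar function $\alpha$, which is bianalytic in $\Omega$ (where $g^{2}=g$), say $\alpha=\ol{z}h_{1}+h_{2}$, vanishes on $\Gamma=\partial\sigma$ and outside. Analyticity of $\Gamma$ gives an $L^{1}(\Omega)$ bound on $h_{1}=\partial\alpha/\partial\ol{z}$ (Theorem 5.6 of the cited renormalized-Riesz-potential paper) and a local Schwarz function near $\Gamma$; Stokes' theorem on an exhaustion then yields $\int_{\Omega}h_{1}f\,dA=0$ for every $f\in\mathcal{O}(\ol{\Omega})$, and after multiplying by a polynomial killing the finitely many zeros of $h_{1}$ on a suitable compact, Lagrange interpolation produces a quadrature identity for $\Omega$, contradicting the hypothesis. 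These analytic ingredients (weak continuity of the localized resolvent, bianalyticity of $\alpha$, the $L^{1}$ estimate, the Stokes/Schwarz-function computation) are what your sketch is missing, and without them neither branch of your argument goes through.
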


\begin{proof} We exploit the weak continuity of the generalized resolvent $(T^\ast - \ol{z})^{-1}\xi$, defined over the whole complex plane, \cite{GP}. First we eliminate the case
that ${\rm int\,} \sigma$ is a quadrature domain, characterized by the finite dimensionality of the space $H_{\rm pol}$.

Assume $H_{\rm pol} \neq H$, that is there exists a non-zero vector $u \in H \ominus H_{\rm pol}$. Consider the continuous function
$$ 
\alpha(z) = \langle u, (T^\ast - \ol{z})^{-1}\xi \rangle, \ \ z \in \C.
$$
The analyticity of $f(z)$ in the complement of $\sigma$ and a Neumann series at infinity imply
\begin{equation}\label{fzero} 
\alpha(z) = 0, \ \ z \in \C \setminus \sigma.
\end{equation}

In case ${\rm int\,}\sigma = \emptyset$, we infer from continuity that $f$ is identically zero. On the other hand, the span of the vectors $(T^\ast - \ol{z})^{-1}\xi, \ z \in \C,$
is dense in $H$ (see again \cite{GP}), hence $u = 0$, a contradiction.

In case $\Omega := {\rm int\,}\sigma $ is not empty and $\Gamma = \partial \sigma$ is real analytic smooth we argue as follows. 
The function $\alpha(z)$ defined above vanishes on the complement of $\sigma$ and it is
bianalytic in ${\rm int\,}\sigma$. In general $\alpha$ is bianalytic (i.e., $\partial^2 \alpha/\partial \bar{z}^2=0$) in regions where $g^2=g$, as can be seen by
examining the exponential transform. That is
$$ 
\alpha(z) = \ol{z} h_1(z) + h_2(z), \ \ z \in \Omega,
$$
with $h_1,h_2$ analytic functions in $\Omega$. 
In view of (\ref{fzero}):
\begin{equation}\label{f0}
\alpha= 0 \ \  \text{on}\ \ \Gamma.
\end{equation}

If $h_1$ is identically zero, then so is $h_2$ by the maximum principle and there is nothing left to prove. From now on we assume that $h_1$ is not identically zero.

The analyticity of $\Gamma$ also implies the estimate
$$ 
\int_\Omega \| \frac{\partial}{\partial\ol{z}} { (T^\ast - \ol{z})^{-1}\xi} \| \, dA(z) < \infty.
$$
For a proof, see for instance Theorem 5.6 in  \cite{GP-2003}. 
As a consequence of this bound one finds
$$ 
h_1 = \frac{\partial \alpha}{\partial \ol{z}} \in L^1(\Omega, dA),
$$
hence the function $h_2$ is also summable in $\Omega$.

A second implication of the analyticity of the boundary $\Gamma$ is the existence of a compact subset $K \subset \Omega$ and a positive constant $C$, so that
$$
 | \int_\Omega f dA | \leq C \| f \|_{\infty, K}, \ \ f \in {\mathcal O}(\ol{\Omega}),
$$
that is for every analytic function $f$ defined in a neighborhood of $\sigma = \ol{\Omega}$.
Indeed, due to the smooth analyticity of $\Gamma$ there exists an analytic function $S(z)$ defined in a neighborhood of $\Gamma$, subject to the constraint
$$ 
S(\zeta) = \ol{\zeta}, \ \ \zeta \in \Gamma.
$$
This is the so called ``Schwarz function" of the curve $\Gamma$ \cite{AS, Shapiro-1992}. Under these assumptions Stokes' theorem yields
$$ 
\int_\Omega f dA = \frac{1}{2i} \int_\Omega f(z) d\ol{z} \wedge dz = \frac{1}{2i} \int_\Gamma f(\zeta) S(\zeta) d\zeta,
$$
and the latter line integral can be pushed inside $\Omega$ by Cauchy's theorem and the analyticity of $S$.

Let $(\Omega_n)$ be an increasing exhaustion of $\Omega$ with relatively compact domains with smooth boundaries. For a function $f \in {\mathcal O}(\ol{\Omega})$ we find
$$ 
\int_\Omega  h_1 f dA = \lim_n \int_{\Omega_n} h_1 f dA = \lim_n \frac{1}{2i} \int_{\partial {\Omega_n}} \ol{z} h_1 f dz = 
$$ 
$$
\lim_n \frac{1}{2i} \int_{\partial {\Omega_n}} (\ol{z}h_1  + h_2)f dz = \frac{1}{2i} \int_\Gamma \alpha f dz = 0.
$$

Since the analytic function $h_1$ admits only finitely many zeros on $K$, there exists a polynomial $P(z)$ vanishing exactly at these zeros, counting also multiplicities.
We deduce
$$ \int_\Omega P f dA = 0, \ \ \ f \in {\mathcal O}(\ol{\Omega}).$$ 
Lagrange interpolation formula proves then that $\Omega$ is a quadrature domain.
\end{proof}

We will see shortly that one cannot avoid in the statement of the proposition the assumption on $\C \setminus \sigma$ to be connected. A challenging question is to derive similar 
sufficient conditions for the equality $H_{\rm pol}=H$ in the presence of a true shade function $g$ (i.e. $g^2 \neq g$). One step further, one can refine the above criterion to
a non-simply connected domain, by replacing the polynomial closure $H_{\rm pol}$ by the larger space 
$$ H_{\rm an} = {\rm span}_{\lambda \notin \sigma(T)} (T^\ast - \ol{\lambda})^{-1}\xi.$$
This generalization steps outside the realm of orthogonal polynomials and we do not pursue it here.


\section{Three term relation}\label{sec:three term} 

In this section we prove among other things that ellipses are the only black and white shapes in the complex plane (with connected component and without slits) which carry a three term relation in the corresponding quantization by exponential orthogonal polynomials.

Let $(P_n(z))_{n=0}^\infty$ be the exponential orthogonal polynomials associated to a compactly supported shade function
$g : \C \longrightarrow [0,1]$ which is not the characteristic function of a quadrature domain. The Hessenberg matrix ${\mathcal H} = (h_{jk})$, is therefore infinite:
$$ 
h_{jk} = \langle zP_k, P_j \rangle, \ \ j,k \geq 0,
$$
and has only the first sub-diagonal non-zero. If only the first upper-diagonal is non-zero one encounters a classical {\it three term relation}
$$ 
z P_k(z) = h_{k+1,k} P_{k+1}(z) + h_{kk} P_k(z) + h_{k-1,k}P_{k-1}(z), \ \ k \geq 0,
$$
where $P_{-1}=0$ by convention. It is well known that orthogonal polynomial in the Lebesgue space $L^2$ of a positive measure supported by the real line satisfy
a three term relation. A necessary cosmetic modification also produces a three term relation for orthogonal polynomials on the circle.

Our main result reduces the three term relation to a much weaker recursion identity, identified by the first row of Hessenberg's matrix. Namely
$$ 
z P_k(z) = \sum_{j=1}^{k+1} h_{jk} P_j(z), \ \ k \geq 2,
$$
instead of having the summation starting with $j=0$. 
In other terms
$$ 
\langle z P_k(z), P_0(z) \rangle = 0, \ k \geq 2,
$$
or equivalently, Hessenberg matrix $\mathcal H$ has only the first two terms possibly non-zero on the first row:
$$ 
{\mathcal H}  = \left( \begin{array}{cccccc}
                                              h_{00} & h_{01} & 0 & 0 & 0& \ldots\\
                                              h_{10} & h_{11} & h_{12} & h_{13} & h_{14} & \ldots\\
                                              0 & h_{21} & h_{22} & h_{23} & h_{24} & \ldots\\
                                              0& 0 & h_{32} & h_{33} & h_{34} & \ldots\\
                                              \vdots & & \vdots & & \ddots & \ddots\\
                                              \end{array} \right).
$$

Recall that the first row of Hessenberg's matrix can be computed in the spirit of classical orthogonal polynomials by the formula:
$$ 
\langle z P_k(z), P_0(z) \rangle = \gamma \int_\C z P_k(z) g(z) \ {dA}(z),
$$ 
where $\gamma$ is a constant, \cite{GP} ($\gamma=1/(\pi \|\xi \|)$, necessarily).

The precise statement is:

\begin{theorem}\label{ellipse}  
Assume the exponential orthogonal polynomials $(P_n(z))_{n=0}^\infty$ associated to a compactly supported shade function
$g : \C \longrightarrow [0,1]$ are complete in the Hilbert space realization of the exponential transform of $g$, i.e. $H_{\rm pol}=H$.
If, in addition, the recurrence relation among $P_k(z)'s$ is missing the $0$-th order term for all $k \geq 2$, or equivalently if
\begin{equation}\label{zPg}
\int_\C z P_k(z) g(z) \, { d A}(z) = 0, \ \ k \geq 2,
\end{equation}
then $g$ is the characteristic function of an ellipse.
\end{theorem}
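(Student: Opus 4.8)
The plan is to move everything to the operator side and show that the hypothesis forces $T$ to be, up to a translation, the Toeplitz operator with a Joukowski symbol, whose principal function is the indicator of an ellipse. Since we assume completeness $H_{\rm pol}=H$, the vectors $e_k=P_k(T^\ast)\xi$ form an orthonormal basis of $H$, the matrix of $T^\ast$ in this basis is $\mathcal H$, and
$$ h_{0k}=\langle T^\ast e_k,e_0\rangle=\tfrac{1}{\|\xi\|}\,\langle e_k,T\xi\rangle .$$
Hence condition \eqref{zPg}, i.e.\ $h_{0k}=0$ for $k\ge 2$, says exactly that $T\xi\in\operatorname{span}\{e_0,e_1\}=\operatorname{span}\{\xi,T^\ast\xi\}=H_1$ (using $P_1(z)=\gamma_1 z+P_1(0)$ with $\gamma_1>0$); equivalently the first row of $\mathcal H$ is supported on its first two entries.

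First I would propagate this to tridiagonality of $\mathcal H$, equivalently $T H_j\subseteq H_{j+1}$ for every $j\ge 0$ --- note that $T^\ast H_j\subseteq H_{j+1}$ holds automatically, being the single-subdiagonal structure of $\mathcal H$. I do this by induction on $j$: the case $j=0$ is the hypothesis, and for $j\ge 1$, assuming $TH_{j-1}\subseteq H_j$, the self-commutator relation $TT^\ast=T^\ast T-\xi\otimes\xi$ gives
$$ T(T^\ast)^{j}\xi=T^\ast\!\bigl(T(T^\ast)^{j-1}\xi\bigr)-\langle (T^\ast)^{j-1}\xi,\xi\rangle\,\xi\in T^\ast H_j+\C\xi\subseteq H_{j+1},$$
so that $TH_j=T\bigl(H_{j-1}+\C(T^\ast)^j\xi\bigr)\subseteq H_{j+1}$.

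Next, with $\mathcal H$ tridiagonal I would read off the few scalar recursions hidden in the operator identity $[\mathcal H,\mathcal H^\ast]=\mathcal H\mathcal H^\ast-\mathcal H^\ast\mathcal H=\|\xi\|^2\,e_0\otimes e_0$. Writing $\alpha_k=h_{kk}$, $\beta_k=h_{k+1,k}>0$ and $\delta_k=h_{k,k+1}$, comparison of the $(k,k)$, $(k,k+1)$ and $(k,k+2)$ entries yields respectively $|\delta_k|^2-\beta_k^2=\|\xi\|^2$ (a positive constant, as $\|g\|_1>0$), $\beta_k(\alpha_k-\alpha_{k+1})=\delta_k\,\ol{(\alpha_k-\alpha_{k+1})}$, and $\delta_k\beta_{k+1}=\beta_k\delta_{k+1}$. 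Since $|\delta_k|>\beta_k$, the second relation forces $\alpha_k\equiv\alpha$, the third forces $\delta_k/\beta_k$ to be a constant $\rho$, and then the first forces $\beta_k$ --- hence $\delta_k=\rho\beta_k$ --- to be constant, with $|\rho|>1$. Replacing $T$ by $T-\alpha$ (this only translates the shade function, and translates preserve the class of ellipses) we obtain $T^\ast=\delta S^\ast+\beta S$ in the basis $(e_k)$, where $S$ is the unilateral shift, $\beta>0$ and $|\delta|>\beta$.

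Finally, identifying $e_k$ with the monomial $w^k$ in the Hardy space $H^2(\D)$ realizes $T$ as the Toeplitz operator $T_\varphi$ with symbol $\varphi(w)=\ol{\delta}\,w+\beta\,w^{-1}$ on $\partial\D$. Because $|\ol{\delta}|>\beta>0$, the curve $\varphi(\partial\D)$ is an ellipse run once in the positive sense, so the principal function of $T_\varphi$ --- which equals the winding-number function $\lambda\mapsto\wind_\lambda(\varphi)$, see \cite{GP,MP} --- is the characteristic function of the open elliptical region it bounds; undoing the translation by $\alpha$ gives an arbitrary ellipse. The step I expect to be the real obstacle is this last one: reaching $T^\ast=\delta S^\ast+\beta S$ is elementary linear algebra, but certifying that the principal function of $T_\varphi$ is \emph{exactly} $\chi_E$ (rather than merely agreeing on areas or lower moments with such a $g$) requires the Toeplitz-symbol/winding-number computation, i.e.\ the explicit ellipse example worked out in \cite{GP}; one must also note that the strict inequality $|\delta|>\beta$ --- equivalently $\|\xi\|^2=\|g\|_1/\pi>0$ --- is precisely what makes $\varphi$ a simple closed curve with winding number one in its interior.
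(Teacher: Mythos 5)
Your proof is correct and follows essentially the same route as the paper: translate the hypothesis into $T\xi\in H_1$, propagate via the commutator identity to tridiagonality of $\mathcal H$, solve the resulting scalar relations from $[\mathcal H,\mathcal H^\ast]=\|\xi\|^2\,e_0\otimes e_0$ to conclude that $\mathcal H$ is a constant tridiagonal Toeplitz matrix, and identify the principal function of the resulting model operator as the indicator of an ellipse. The only (harmless) deviations are that you exploit the normalization $h_{k+1,k}=\gamma_k/\gamma_{k+1}>0$ to skip the paper's diagonal unitary conjugation, and you certify the final ellipse identification via the Toeplitz symbol/winding-number description of the principal function rather than by citing the worked example of $uS+S^\ast$ in \cite{GP} --- both standard and equivalent.
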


\begin{proof} Denote by $T \in L({H})$ the irreducible hyponormal operator $T$ with rank-one self-commutator
$ [T^\ast, T] = \xi \otimes \xi$ and with principal function equal, a.e., to the prescribed shade function $g$.
The assumption that the exponential orthogonal polynomials are complete is equivalent (by definition) to the fact that the filtration
of finite dimensional spaces
$$ 
H_k = {\rm span} \{ \xi, T^\ast \xi, \ldots, T^{\ast k} \xi\}, \ \ k \geq 0,
$$
does not stop and it spans  $H$. In other terms the system of orthonormal vectors $e_k = P_k(T^\ast)\xi$, $k \geq 0$, spans $H$.

Hessenberg's matrix $(h_{jk})_{j,k=0}^\infty$ represents the linear transform $T^\ast$ in the basis $(e_k)$, while its conjugate transpose represents
$T$. The assumption (\ref{zPg}) and the completeness of orthogonal polynomials translate into 
$$ 
T \xi \in H_1,
$$
or equivalently
\begin{equation}\label{TabT}
T\xi = a \xi + b T^\ast \xi,
\end{equation}
for some complex numbers $a,b$. Induction based on the identity
\begin{equation}\label{commutator}
T T^{\ast k} = T^{\ast k}T + \sum_{j=0}^{k-1} T^{\ast j} [T,T^\ast] T^{\ast k-j-1}=
T^{\ast k}T - \sum_{j=0}^{k-1} T^{\ast j} \xi \langle \cdot, T^{k-j-1}\xi \rangle
 \end{equation}
implies, together with (\ref{TabT}), that
$$ 
T H_k \subset H_{k+1}, \ \ k \geq 0.
$$
In other terms, Hessenberg's matrix is tri-diagonal. According to Theorem 5.2 of \cite{GP}, the shade function $g$ coincides then with the characteristic function of an ellipse.

We repeat for completeness the simple argument contained in \cite{GP}. Denote 
$$
a_k = \overline{h_{kk}},\quad b_{k+1} = \overline{h_{k,k+1}}, \quad c_{k+1} = \overline{h_{k+1,k}}, \quad k \geq 0,
$$
the three non-trivial diagonals in Hessenberg's matrix. The commutator assumption $[T^\ast,T] = \xi \otimes \xi$ reads on matrix elements:
$$
 |a_0|^2 + |b_1|^2 = r + |a_0|^2 + |c_1|^2, 
$$
$$
 |c_1|^2 + |a_1|^2 + |b_2|^2 = |b_1|^2 + |a_1|^2 + |c_2|^2, \ldots,$$
$$ 
\overline{a_0}c_1 + \overline{b_1} a_1 = a_0 \overline{b_1} + c_1 \overline{a_1},
$$
$$
 \overline{a_1}c_2 + \overline{b_2} a_2 = a_1\overline{b_2} + c_2 \overline{a_2}, \ldots   
$$
and
$$ 
\overline{b_1} c_2 = c_1 \overline{b_2}, \quad \overline {b_2} c_3 = c_2 \overline{b_3}, \ldots .
$$
We have denoted  $r = \|\xi\|^2 >0.$

Elementary algebra implies 
$$ 
|b_1| = |b_2| = |b_3| = \ldots; \ \ |c_1| = |c_2| = |c_3| = \ldots.
$$
In virtue of the assumption that the chain of Krylov subspaces $(H_k)$ is not stationary, the entries $c_k$ are non-zero.

Then a unitary  transformation $T \mapsto U^\ast T U$ with $U$ diagonal reduces lower-diagonal entries to a positive constant:
$$ 
b_1 = b_2 = b_3 = \ldots = s>0.
$$
Again simple algebra yields
$$ 
c_1 = c_2 = c_3 = \ldots = u \in \C,
$$
and
$$ 
a_1 = a_2 = a_3 = \ldots = a.
$$
After a translation $T \mapsto T-aI$ and homotethy we can assume $a=0$ and $s = 1$ reaching after all the operator 
$$ 
T_1 = uS + S^\ast,
$$
where $S$ denotes the unilateral shift on $\ell^2(\N)$. But then it is known, and easy to verify, that
the principal function $g_{T_1}$ is the characteristic function of an ellipse. The affine transformation can be reversed
at both levels, operator, respectively, principal function, and thus completes the proof.

We refer to Section 5.3 in \cite{GP} for complete details.
\end{proof}

The reader will recognize in the above proof a purely matrix algebra phenomenon. To be more specific, let ${\mathcal H} = (h_{jk})_{j,k=0}^\infty$
denote a Hessenberg matrix with a single non-trivial subdiagonal consisiting of non-null elements. Assume that ${\mathcal H}$
induces a linear bounded operator on $\ell^2(\N)$ and denote by $\{ e_0, e_1, e_2, \ldots \}$ the canonical orthonormal basis of $\ell^2(\N)$. Theorem~\ref{ellipse}
states that the apparently innocent conditions
$$ 
[{\mathcal H}, {\mathcal H}^\ast] = \alpha \, e_0 \otimes e_0,
$$
and 
$$ 
{\mathcal H}^\ast e_0 = \beta e_0 + \gamma e_1,
$$
where $\alpha >0$, $\beta$, $\gamma$ are constants, imply ${\mathcal H}$ is a Toeplitz matrix with at most three non-trivial diagonals: the main diagonal, the first sub-diagonal and the first upper-diagonal.
In other terms: ${\mathcal H} = c_1 S + c_0 + c_{-1} S^\ast$, where $S$ denotes the unilateral shift and $c_{-1}, c_0, c_1$ are constants.

A simple adaptation of the commutator identity (\ref{commutator}) yields the following consequence.

\begin{corollary}\label{finite-term}
Let $(P_n(z))_{n=0}^\infty$ be a complete system of exponential orthogonal polynomials associated to a shade function
$g $. Let $d$ be a positive integer. If 
$$
\int_\C z P_k(z) g(z) \, {d A}(z) = 0, \ \ k \geq d+1, 
$$
then these orthogonal polynomials satisfy a $(d+2)$-recurrence relation:
\begin{equation}\label{FT}
z P_k(z) = \sum_{j=k-d-1}^{k+1} h_{jk} P_j(z), \ \ k \geq d+1.
\end{equation}
\end{corollary}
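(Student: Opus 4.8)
The plan is to reproduce, with a shifted index, the argument used to prove Theorem~\ref{ellipse}. Let $T\in L(H)$ be the irreducible hyponormal operator with $[T^\ast,T]=\xi\otimes\xi$ and principal function $g$, put $e_k=P_k(T^\ast)\xi$, and recall that completeness means exactly that $(e_k)_{k\ge 0}$ is an orthonormal basis of $H$ and that $\mathcal H=(h_{jk})$ is the matrix of $T^\ast$ in this basis.

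First I would translate the hypothesis into operator language. By the identity $\langle zP_k,P_0\rangle=\gamma\int_\C zP_k(z)g(z)\,dA(z)$ with $\gamma\ne 0$, the assumption is equivalent to $h_{0k}=0$ for all $k\ge d+1$. Writing $h_{0k}=\langle T^\ast e_k,e_0\rangle=\langle e_k,Te_0\rangle$ and using that $(e_k)$ is a basis of $H$ — this is precisely where completeness enters, as it did in Theorem~\ref{ellipse} — we get $Te_0\perp e_k$ for every $k\ge d+1$, i.e.
$$T\xi\in H_d={\rm span}\{\xi,T^\ast\xi,\ldots,T^{\ast d}\xi\}.$$
Fix constants $c_0,\ldots,c_d$ with $T\xi=\sum_{i=0}^{d}c_i\,T^{\ast i}\xi$.

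Next I would propagate this relation up the Krylov filtration by applying the commutator identity~(\ref{commutator}) to the vector $\xi$: for every $k\ge 0$,
$$TT^{\ast k}\xi=T^{\ast k}(T\xi)-\sum_{j=0}^{k-1}\langle\xi,T^{k-j-1}\xi\rangle\,T^{\ast j}\xi.$$
The finite sum on the right lies in $H_{k-1}$, while $T^{\ast k}(T\xi)=\sum_{i=0}^{d}c_i\,T^{\ast(k+i)}\xi\in{\rm span}\{T^{\ast k}\xi,\ldots,T^{\ast(k+d)}\xi\}\subset H_{k+d}$; hence $TT^{\ast k}\xi\in H_{k+d}$ for all $k\ge 0$. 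Since each $e_j$ lies in $H_j={\rm span}\{\xi,\ldots,T^{\ast j}\xi\}$, linearity gives $Te_j\in H_{j+d}$ for all $j\ge 0$.

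It then remains to read off the band structure: $h_{jk}=\langle e_k,Te_j\rangle=0$ whenever $k>j+d$, that is whenever $k-j\ge d+1$. Combined with the Hessenberg shape $h_{jk}=0$ for $j\ge k+2$, this confines, for every $k\ge d+1$, the nonzero entries of column $k$ to the band $k-d\le j\le k+1$, which yields the $(d+2)$-term relation~(\ref{FT}). I do not expect a genuine obstacle: the only delicate points are the appeal to completeness needed to reach $T\xi\in H_d$ and the index bookkeeping in~(\ref{commutator}). It is worth emphasizing — in contrast with Theorem~\ref{ellipse} — that the self-commutator relation is \emph{not} invoked in this corollary, so no constant-diagonal (Toeplitz) rigidity follows, only the finite bandwidth of $\mathcal H$.
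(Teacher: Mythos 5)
Your proof is correct and is precisely the argument the paper intends: the paper offers no separate proof beyond the remark that ``a simple adaptation of the commutator identity (\ref{commutator})'' gives the corollary, and your chain $h_{0k}=0$ for $k\ge d+1$ $\Rightarrow$ $T\xi\in H_d$ $\Rightarrow$ $TH_k\subset H_{k+d}$ $\Rightarrow$ bandwidth $d$ is exactly that adaptation (it also matches the equivalence $T\xi=Q(T^\ast)\xi$ spelled out at the start of Section~\ref{sec:algebra}). Two small remarks: the band $k-d\le j\le k+1$ you obtain is the correct $(d+2)$-term count, so the lower limit $k-d-1$ in (\ref{FT}) appears to be an off-by-one slip in the paper rather than a defect in your argument; and your closing claim that the self-commutator relation is not invoked is inaccurate, since identity (\ref{commutator}) \emph{is} the relation $[T^\ast,T]=\xi\otimes\xi$, and the fact that this commutator has rank one with range $\C\xi$ is exactly what keeps the correction terms $T^{\ast j}\xi\,\langle\xi,T^{k-j-1}\xi\rangle$ inside $H_{k-1}$ --- what you genuinely do not need is the entrywise computation that forces constant diagonals in Theorem~\ref{ellipse}.
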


In other terms, if the first row in Hessenberg's matrix ${\mathcal H}$ has non-zero entries only on the positions $0,1,\ldots,d$, then 
the whole matrix will have only the first $d$ upper-diagonals non-zero. That is ${\mathcal H}$ is a finite banded matrix.


\section{Algebra of exponential transform coefficients}\label{sec:algebra}

The finite term relation fulfilled by the exponential orthogonal polynomials implies, and can be derived, from purely algebra manipulations of the
Taylor coefficients at infinity of the underlying exponential transform. We expand in this section some of these computations bearing a numerical matrix analysis flavor.
Seen as a formal transform of the power moments of the generating shade function $g$, the Taylor series at infinity of the exponential transform encodes 
subtle geometric properties. For instance a specific rationality of this double series is equivalent to $g$ being the characteristic function of a quadrature domain.
Along these lines, Chapter 5 of \cite{GP} exploits a Pad\'e approximation scheme in two variables  of the exponential transform.

Let $T$ be an irreducible hyponormal operator with principal function $g$ and self-commutator $[T^\ast,T]= \xi \otimes \xi$. An expansion at infinity of the associated
exponential transform reads:
\begin{equation}\label{ab}
\exp ( - \sum_{j,k=0}^\infty \frac{a_{jk}}{z^{j+1} \ol{w}^{k+1}}) = 1 - \sum_{j,k=0}^\infty \frac{b_{jk}}{z^{j+1} \ol{w}^{k+1}},
\end{equation}
where $(a_{jk})$ are the power moments of $g$, while 
$$ 
b_{jk} = \langle T^{\ast k}\xi, T^{\ast j}\xi \rangle, \ \ j,k \geq 0.
$$
In this section we avoid the case of a quadrature domain, when not all polynomials $P_n$ are well defined.
Recall that
$$
 \langle P_j(T^\ast)\xi, P_k(T^\ast)\xi \rangle = \delta_{jk}.
$$

According to Corollary \ref{finite-term}, a finite term relation holds for the {\it complete} system of orthogonal polynomials $(P_j)$ if and only if
there exists a positive integer $d$ satisfying
$$ 
\langle T^\ast P_k(T^\ast)\xi , \xi \rangle = 0, \ \ k \geq d+1,
$$
or equivalently, the vector $T\xi$ is a linear combination of $\xi, T^\ast \xi, \ldots, T^{\ast d} \xi$.
Let $Q(z) = q_0 + q_1 z + \ldots q_d z^d$ be a polynomial of degree $d$ encoding this dependence:
\begin{equation}\label{OPrelation}
T\xi = Q(T^\ast) \xi.
\end{equation}
In other terms this equality holds if and only if the numerical certificates
\begin{equation}\label{certif}
 b_{m+1,0} = \langle T\xi, T^{\ast m}\xi \rangle = \langle Q(T^\ast)\xi, T^{\ast m}\xi \rangle = 
\sum_{k=0}^d q_k b_{m, k}\ \ , \ \ m \geq 0,
\end{equation}
are satisfied.

As a matter of fact, if identity (\ref{OPrelation}) is true, then the system of exponential orthogonal polynomials is complete.
Simply because the vectors $T^j T^{\ast k}\xi, \ \ j,k \geq 0,$ can be reduced via the commutation relation $[T^\ast,T] = \xi \otimes \xi$
to combinations of $\xi, T^\ast \xi, T^{\ast 2}\xi, \ldots$.

At the level of formal series, the transfer of these observations to the exponential transform
$$ 
E(z,w) = 1 - \sum_{j,k=0}^\infty \frac{b_{jk}}{z^{j+1} \ol{w}^{k+1}}
$$
is straightforward. Indeed, the coefficient of $\frac{-1}{z^{m+1}\ol{w}}$ in $zE(z,w)$ is
$b_{m+1,0}$, while the same coefficient in $Q(\ol{w})E(z,w)$ is
$\sum_{k=0}^d q_k b_{m, k}$.

In conclusion, we have proved the following criterion.

\begin{proposition}
Let $(P_n(z))_{n=0}^\infty$ be a complete system of exponential orthogonal polynomials associated to a shade function
$g$ and denote by $E(z,w)$ the respective exponential transform. Let $d$ be a positive integer. 

The system  $(P_n)$ satisfies a $(d+2)$-term relation (\ref{FT})
if and only if there exists a polynomial $Q(z)$ of degree $d$ with the property
\begin{equation}\label{res}
 \res_{w=0} [(z-Q(\ol{w}))E(z,w) ] = 0,
 \end{equation}
identically in $z$.
\end{proposition}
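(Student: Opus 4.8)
The plan is to prove the equivalence by chaining three reformulations: two operator-theoretic ones that rewrite the recursion (\ref{FT}) first as the single identity (\ref{OPrelation}) and then as the numerical certificates (\ref{certif}), followed by a purely formal power-series computation that reads (\ref{certif}) off the Taylor expansion at infinity of $E$. Seen this way, the proposition is essentially a dictionary entry translating Corollary~\ref{finite-term} into the language of the coefficients of the exponential transform.

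First I would reduce (\ref{FT}) to (\ref{OPrelation}). By Corollary~\ref{finite-term} and the discussion around it, the $(d+2)$-term relation (\ref{FT}) is equivalent to the first row of ${\mathcal H}$ being supported on $\{0,1,\dots,d\}$, that is $\langle T^\ast P_k(T^\ast)\xi,\xi\rangle=0$ for all $k\ge d+1$, i.e.\ $T\xi\perp e_k$ for $k\ge d+1$. Here the completeness hypothesis is used to promote orthogonality to an honest linear dependence: since $H_{\rm pol}=H=\bigvee_k e_k$, the vector $T\xi$ already lies in $H_{\rm pol}$, so it must lie in the finite-dimensional subspace $H_d={\rm span}\{\xi,T^\ast\xi,\dots,T^{\ast d}\xi\}$; conversely, as noted just before the statement, once $T\xi$ depends linearly on $\xi,T^\ast\xi,\dots,T^{\ast d}\xi$ the system of orthogonal polynomials is automatically complete, so nothing is lost. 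Hence (\ref{FT}) holds if and only if $T\xi=Q(T^\ast)\xi$ for some polynomial $Q$ of degree $\le d$.

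Next I would expand (\ref{OPrelation}) against the cyclic vectors and then transfer the result to $E$. Writing $Q(z)=\sum_{k=0}^d q_k z^k$, and using again $H_{\rm pol}=H$, the equality $T\xi=Q(T^\ast)\xi$ is equivalent to $T\xi-Q(T^\ast)\xi\perp T^{\ast m}\xi$ for every $m\ge0$, which by $\langle T\xi,T^{\ast m}\xi\rangle=b_{m+1,0}$ and $\langle T^{\ast k}\xi,T^{\ast m}\xi\rangle=b_{mk}$ is precisely the certificate system (\ref{certif}). To match this with (\ref{res}) I would substitute the expansion $E(z,w)=1-\sum_{j,k\ge0}\frac{b_{jk}}{z^{j+1}\ol{w}^{k+1}}$ and multiply it term by term once by $z$ and once by $Q(\ol{w})=\sum_{k=0}^d q_k\ol{w}^k$: for each $m\ge0$ the coefficient of $\frac{1}{z^{m+1}\ol{w}}$ is $-b_{m+1,0}$ in $zE(z,w)$ and $-\sum_{k=0}^d q_k b_{mk}$ in $Q(\ol{w})E(z,w)$ — the marginal coefficients already recorded just above the statement — so the coefficient of $\frac{1}{z^{m+1}\ol{w}}$ in $(z-Q(\ol{w}))E(z,w)$ equals $\sum_{k=0}^d q_k b_{mk}-b_{m+1,0}$. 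Since $\res_{w=0}$ returns the coefficient of $\ol{w}^{-1}$, requiring (\ref{res}) to hold identically in $z$ is, coefficient by coefficient in $1/z$, exactly the system (\ref{certif}); combining this with the first reduction yields the proposition.

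The operator-theoretic half (the reduction to (\ref{OPrelation}) and then to (\ref{certif})) is immediate once completeness is available, so I expect the only place demanding genuine care to be the index bookkeeping in the formal computation: keeping the two shifts straight ($j\mapsto m+1$ in the $z$-term, $j\mapsto m$ in the $Q(\ol{w})$-term), and noting that substituting the series for $E$ and multiplying it by $z$ and by the polynomial $Q(\ol{w})$ are legitimate term-by-term operations since the series converges in a neighborhood of infinity, so that no analytic subtlety intervenes.
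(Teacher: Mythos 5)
Your proposal is correct and follows the same route as the paper: reduce (\ref{FT}) to $T\xi=Q(T^\ast)\xi$ via Corollary~\ref{finite-term} and completeness, unfold that identity against the vectors $T^{\ast m}\xi$ into the certificates (\ref{certif}), and identify those with the coefficient of $\ol{w}^{-1}z^{-(m+1)}$ in $(z-Q(\ol{w}))E(z,w)$. The index bookkeeping you single out is exactly the computation the paper records just before the statement, so there is nothing to add.
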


One step further, we prove in the conditions of the above Proposition that similar recurrence relations fill the full matrix of exponential transform coefficients $(b_{jk})_{j,k=0}^\infty$
having as input the marginal data $(b_{j,0})_{j=0}^\infty$. Indeed, we start from
$$ T^{\ast n} T \xi = T^{\ast n} Q(T^\ast) \xi,$$
where $n$ is a positive integer. The commutation relation imposed on $T$ and $T^\ast$ implies
$$ T^{\ast n} T = T T^{\ast n} + \sum_{j=0}^{n-1} T^{\ast j} [T^\ast,T] T^{\ast (n-1-j)} = T T^{\ast n} + \sum_{j=0}^{n-1} T^{\ast j}\xi \langle \cdot,  T^{(n-1-j)}\xi \rangle.$$
Let $m$ be a non-negative integer. Then
$$ \langle T T^{\ast n}\xi  , T^{\ast m}\xi \rangle = \langle T^{\ast n} Q(T^\ast) \xi, T^{\ast m}\xi \rangle - \sum_{j=0}^{n-1} \langle T^{\ast j}\xi, T^{\ast m}\xi \rangle \langle \xi,  T^{(n-1-j)}\xi \rangle.$$

Hence we can state the following complement to the finite term criterion.

\begin{corollary}\label{cor:recursion} 
Let $(P_n(z))_{n=0}^\infty$ be a complete system of exponential orthogonal polynomials associated to a shade function
$g$ and denote by 
$$ 
E(z,w) = 1 - \sum_{j,k=0}^\infty \frac{b_{jk}}{z^{j+1} \ol{w}^{k+1}}
$$ 
the respective exponential transform. Fix a positive integer $d$.

If there are complex constants $q_0, q_1, \ldots, q_d$ satisfying
\begin{equation}\label{first-column}
 b_{m+1,0} = \sum_{k=0}^d q_k b_{m, k}, \ \ m \geq 0,
 \end{equation}
then
\begin{equation}\label{all-columns}
 b_{m+1,n} = \sum_{k=0}^d q_k b_{m, k+n} - \sum_{j=0}^{n-1} b_{mj} b_{0, n-j-1},
 \end{equation}

for all $m \geq 0$ and $n \geq 1$.

\end{corollary}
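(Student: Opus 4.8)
The plan is to derive \eqref{all-columns} directly from the operator identity that precedes the corollary statement, treating \eqref{first-column} as the hypothesis \eqref{OPrelation} recast on matrix coefficients. First I would observe that \eqref{first-column} says precisely that $T\xi - Q(T^\ast)\xi$ is orthogonal to every $T^{\ast m}\xi$, $m \geq 0$, hence orthogonal to $H_{\rm pol}$; since by the remark following \eqref{certif} the completeness of the system forces $H_{\rm pol} = H$ (the vectors $T^j T^{\ast k}\xi$ reduce via $[T^\ast,T]=\xi\otimes\xi$ to combinations of the $T^{\ast k}\xi$), we get the genuine operator identity $T\xi = Q(T^\ast)\xi$, i.e. \eqref{OPrelation}. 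This upgrade from ``coefficients agree'' to ``vectors agree'' is the one genuinely non-formal point, and it is exactly what lets us apply $T^{\ast n}$ to both sides.

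Next I would apply $T^{\ast n}$ to \eqref{OPrelation}, obtaining $T^{\ast n} T \xi = T^{\ast n} Q(T^\ast)\xi$, and then rewrite the left-hand side using the displayed commutation expansion
$$ T^{\ast n} T = T T^{\ast n} + \sum_{j=0}^{n-1} T^{\ast j}\xi \langle \cdot,\, T^{n-1-j}\xi\rangle, $$
which is the standard telescoping of $[T^\ast,T]=\xi\otimes\xi$ through the word $T^{\ast n}$. Pairing both sides against $T^{\ast m}\xi$ gives
$$ \langle T T^{\ast n}\xi,\, T^{\ast m}\xi\rangle = \langle T^{\ast n} Q(T^\ast)\xi,\, T^{\ast m}\xi\rangle - \sum_{j=0}^{n-1} \langle T^{\ast j}\xi,\, T^{\ast m}\xi\rangle\, \langle \xi,\, T^{n-1-j}\xi\rangle, $$
exactly the identity displayed just before the corollary.

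It then remains to translate each of the three terms into the $b$-notation $b_{jk} = \langle T^{\ast k}\xi, T^{\ast j}\xi\rangle$. The left side is $b_{m,\,n+1}$ after noting $\langle T T^{\ast n}\xi, T^{\ast m}\xi\rangle = \langle T^{\ast n}\xi, T^{\ast(m+1)}\xi\rangle = b_{m+1,n}$ (here I use $\langle T x, y\rangle = \langle x, T^\ast y\rangle$; one must be slightly careful that the indexing convention $b_{jk}=\langle T^{\ast k}\xi, T^{\ast j}\xi\rangle$ puts the ``column'' index second, so $\langle T^{\ast n}\xi, T^{\ast(m+1)}\xi\rangle = b_{m+1,n}$). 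Expanding $Q(T^\ast) = \sum_{k=0}^d q_k T^{\ast k}$ turns the first term on the right into $\sum_{k=0}^d q_k \langle T^{\ast(n+k)}\xi, T^{\ast m}\xi\rangle = \sum_{k=0}^d q_k b_{m,\,n+k}$. Finally $\langle T^{\ast j}\xi, T^{\ast m}\xi\rangle = b_{mj}$ and $\langle \xi, T^{n-1-j}\xi\rangle = \overline{\langle T^{n-1-j}\xi,\xi\rangle} = \overline{\langle \xi, T^{\ast(n-1-j)}\xi\rangle}$; identifying this with $b_{0,\,n-1-j}$ requires noting $b_{0,\,n-1-j} = \langle T^{\ast(n-1-j)}\xi, \xi\rangle$, which is real or otherwise matches by the Hermitian symmetry $b_{jk} = \overline{b_{kj}}$, so the sum becomes $\sum_{j=0}^{n-1} b_{mj} b_{0,\,n-j-1}$. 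Assembling the three pieces yields \eqref{all-columns}. The only real obstacle is the bookkeeping of conjugates and the index convention; the case $n=1$ (where the inner sum has a single term $b_{m0} b_{00}$) is a good sanity check, and one should also verify that setting $n=0$ formally recovers \eqref{first-column} with the empty sum, confirming consistency.
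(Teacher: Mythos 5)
Your proposal is correct and follows essentially the same route as the paper: upgrade \eqref{first-column} to the vector identity $T\xi=Q(T^\ast)\xi$ using completeness, apply $T^{\ast n}$, expand $T^{\ast n}T$ via the telescoped commutator $[T^\ast,T]=\xi\otimes\xi$, and pair against $T^{\ast m}\xi$. The only blemishes are cosmetic: the passing claim that the left side is $b_{m,n+1}$ (immediately corrected to $b_{m+1,n}$ in the same sentence), and the detour through complex conjugates at the end, which is unnecessary since $\langle \xi, T^{n-1-j}\xi\rangle=\langle T^{\ast(n-1-j)}\xi,\xi\rangle=b_{0,n-1-j}$ directly by adjointness.
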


 In function theory terms, we rephrase the above computations into the following statements invoking a shade function $g$ which carries a complete system of
 exponential orthogonal polynomials:
 \bigskip
 
{\bf 1).} Fix a positive integer $d$. The Cauchy transforms 
$$
F_k(z) = \frac{-1}{\pi} \int \frac{g(\zeta) \ol{\zeta}^k dA(\zeta)}{\zeta -z}, \ \ 0 \leq k \leq d,
$$
determine via condition (\ref{res}) whether the exponential orthogonal polynomials satisfy a $(d+2)$-finite term relation.

To be more precise, we seek a polynomial $Q(z)$ of degree $d$ with the property
\begin{equation}\label{Cauchy-transf}
  \res_{w=0} [(z-Q(\ol{w}))\exp( -\frac{F_0(z)}{\ol{w}} -\frac{F_1(z)}{\ol{w}^2} - \cdots - \frac{F_d(z)}{\ol{w}^{d+1}}) ] = 0, \ \ |z| >> 1.
  \end{equation}
\bigskip

{\bf 2).} In this case, Cauchy's transform $F_0(z)$ and the polynomial $Q(z)$ resulting from 1) determine the generating shade function $g$.
\bigskip

Notice that condition (\ref{Cauchy-transf}) does not imply, in general, that the system of exponential orthogonal polynomials is complete.
Rotationally invariant shapes, discussed in Section \ref{Rot} provide counterexamples.

The numerical analyst might feel betrayed by statement 2) above. A clarification is in order: our recursion formulas (\ref{all-columns}) determine 
all power moments of the function $g$. In particular one can estimate from there the location of the support of $g$, say in the unit square $ -1 \leq {\rm Re\,} z \leq 1$, 
$-1 \leq {\rm Im\,} z \leq 1.$ Then $g$ (a bounded measurable function) can be recovered by its Fourier expansion with respect to a basis of classical bi-variate 
orthogonal polynomials adapted to the square (such as Legendre polynomials), or via an inversion formula for the Laplace-Fourier transform of $g$.
We do not expand here these details.


\section{Examples}\label{sec:examples}

In this section we illustrate by means of a couple of examples a departure from the Bergman or Hardy space scenario \cite{PS,KS,D,Sz}. To be more specific, a finite term relation for the exponential orthogonal polynomials
does not imply in general that the underlying shape is an ellipse.


\subsection{Two non-zero diagonals} 

We will construct a sparse infinite matrix $T$, bounded as a linear transformation on $\ell^2(\N)$ and satisfying
\begin{equation}\label{self-commutator}
 [T^\ast, T] = e_0 \otimes e_0,
 \end{equation}
 where $\{e_0, e_1, e_2, \ldots \}$ is the orthonormal basis with respect to which one expresses the matrix elements.

Specifically, we choose recursively real numbers $a_1, a_2, a_3, \ldots$ and $b_0, b_1, b_2, \ldots$ which populate the matrix
$$ 
T = \left( \begin{array}{ccccc}
                0& a_1&0&0& \ldots\\
                0&0&a_2&0& \ldots \\
                b_0&0&0& a_3 \\
                0&b_1& 0 & 0 & \ddots\\
                \vdots& & \vdots& & \ddots
                \end{array} \right).
$$
We display for convenience the adjoint:
$$ 
T^\ast =  \left( \begin{array}{ccccc}
                0& 0 &b_0&0& \ldots\\
                a_1&0&0&b_1& \ldots \\
                0&a_2&0& 0& b_2 \\
                0&0& a_3 & 0 & \ddots\\
                \vdots& & \vdots& & \ddots
                \end{array} \right).
$$
Condition (\ref{self-commutator}) is equivalent to 
   $$ b_0^2 - a_1^2 =1,$$
   $$ a_1^2 + b_1^2 = a_2^2,$$
   $$a_2^2 + b_2^2 = a_3^2+ b_0^2,$$
   $$a_n^2 + b_n^2 = a_{n+1}^2 + b_{n-2}^2, \ \ n \geq 2,$$
   and
   $$ b_0 a_3 = a_1 b_1,$$
   $$ b_1 a_4 = a_2 b_2,$$
   $$ b_{n-1}a_{n+2} = a_n b_n, \ \ n \geq 1.$$
   
   We denote for convenience $A_n = a_n^2, \ \ n \geq 1,$ and $B_n = b_n^2, \ \ n \geq 0.$
   Therefore
   $$ B_0 = A_1+1, \ B_1 = A_2-A_1,$$
   and in general
   $$ B_n = A_{n+1} - A_n + B_{n-2}, \ \ n \geq 2.$$
   Similarly,
   $$ B_{n-1}A_{n+2} = A_n B_n, \ \ n \geq 1.$$
   Note that addition yields
   \begin{equation}\label{sum-B}
    B_{n} + B_{n-1} = A_{n+1} + 1, \ \ n \geq 1.
    \end{equation}
   
   We first prove that by selecting the initial data $A_1>0$, $B_1>0$ the double sequence recurrence 
   goes through to infinity. Remark that $A_3 >0$ in this case. 
   
   Indeed, since division is involved we have to check that all $A_k's$ are non-zero.
   Assume $A_{k+3}=0$ and this is the first occurrence (for some $k>0$). We infer from
   $$A_{k+3}B_k = A_{k+1}B_{k+1} = 0$$
   that $B_{k+1} = 0.$ But then
   $$ A_{k+4}B_{k+1} = A_{k+2} B_{k+2} = 0$$ 
   implies $B_{k+2} = 0$. In conclusion we obtain a {\it finite} matrix
   $$ S = \left( \begin{array}{cccccc}
                0& a_1&0&0& \ldots &0\\
                0&0&a_2&0& \ldots  &0 \\
                b_0&0&0& a_3 & \ldots & 0\\
            \vdots& & \vdots& & \ddots & \vdots\\
            0 & \ldots & b_{n-1} & 0 & 0 & a_{n+2}\\
            0 & \ldots & 0 & b_n & 0 & 0 \\
                \end{array} \right)$$
   which satisfies $[S^\ast,S] = e_0 \otimes e_0$. But this is a contradiction to the fact that a commutator of
   finite matrices has vanishing trace. Or simply remark that identity (\ref{sum-B}) is violated.
   
   Thus the recurrence goes through to infinity and produces the two sequences of real numbers $(a_n)$, $(b_n)$
   and respectively the sequences of positive numbers $(A_n)$, $(B_n)$.

   Returning to the generating identities,  
  we can eliminate $B_n$ by multiplying (\ref{sum-B}) by $A_{n+2}$:
   $$ B_n A_{n+2} + B_n A_n = A_{n+2}+ A_{n+2}A_{n+1}$$
   whence
   $$ B_n = \frac{A_{n+2}+ A_{n+2}A_{n+1}}{A_n + A_{n+2}}, \ \ n \geq 1.$$
   On the other hand,
   $$ B_n = \frac{A_{n+1}}{A_{n+3}} B_{n+1}$$ yields
   $$ A_{n+2}(1+A_{n+1}) A_{n+3} (A_{n+1}+A_{n+3}) = A_{n+1} A_{n+3} (1+A_{n+2})(A_{n+1}+ A_{n+3}),$$
   or
   $$ \frac{A_{n+3}+A_{n+1}}{A_{n+2} + A_n} = \frac{1 + 1/A_{n+2}}{1+1/A_{n+1}}, \ \ n \geq 1.$$
   The telescopic product implies
   $$ A_{n+3} + A_{n+1} = \frac{A_3+A_1}{1+ 1/A_2} (1+ 1/A_{n+2}), \ \ n \geq 0.$$
   Denote $C =\frac{A_3+A_1}{1+ 1/A_2} $.
   
  It remains to prove that the sequence $(A_n)$ is uniformly bounded from above. In virtue of (\ref{sum-B}) the sequence $(B_n)$ would also be bounded
  and therefore $T$ will be a linear continuous transformation on $\ell^2$.
  
  Assume that there exists a subsequence $A_{n(j)}$ satisfying
  $$ \lim_j A_{n(j)} = \infty.$$ Identity
  $$ C(1+ \frac{1}{A_{n(j)+1}}) = A_{n(j)} + A_{n(j)+2}$$
  yields
  $$ \lim_j A_{n(j)+1} = 0.$$ 
Thus $B_{n(j)-1} + B_{n(j)} = A_{n(j)+1} + 1$ remains bounded as a function of $j$.
From the similar identity $B_{n(j)-2} + B_{n(j)-1} = A_{n(j)} + 1$ we find 
$$ \lim_j B_{n(j)-2} = \infty.$$
And $B_{n(j)-3} + B_{n(j)-2} = A_{n(j)-1} + 1$ implies
$$ \lim_j A_{n(j)-1} = \infty.$$
By repeating the above reasoning we find 
$$ \lim_j A_{n(j)} = 0,$$
a contradiction.
   
   Summing up, the following statement was just proved.
   
   \begin{prop} There exists a Borel measurable shade function $g : \C \longrightarrow [0,1]$ with compact support, so that the associated exponential
   orthogonal polynomials $P_n(z)$ satisfy the recursion
   $$ z P_n(z) = a_{n+1} P_{n+1}(z) + b_{n-2}P_{n-2}(z), \ \ n \geq 0,$$
   where $P_{-1} = P_{-2} = 0$ .
   \end{prop}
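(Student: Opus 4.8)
The plan is to recognize that essentially all the work has already been carried out in the preceding construction, and that what remains is to feed it into the operator-theoretic dictionary of Section~\ref{sec:preliminaries}. First I would fix signs: take $a_n = \sqrt{A_n} > 0$ for $n \geq 1$ and $b_n = \sqrt{B_n} > 0$ for $n \geq 0$. One must check this choice is simultaneously compatible with all the defining relations; this is immediate, since $B_0 = A_1+1$ and the quadratic relations involve only the squares, while the remaining relations read $b_{n-1} a_{n+2} = a_n b_n$, which holds for the positive roots because it holds after squaring. Combined with the uniform boundedness of $(A_n)$ and $(B_n)$ proved above, the displayed matrix $T$ then defines a bounded operator on $\ell^2(\N)$, and by construction $[T^\ast, T] = e_0 \otimes e_0$, so $T$ is hyponormal with rank-one self-commutator and defect vector $\xi = e_0$.

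Next I would check that $T$ is irreducible. If $M$ reduces $T$ it also reduces $[T^\ast,T] = e_0\langle\cdot,e_0\rangle$; if some $u \in M$ has $\langle u, e_0 \rangle \neq 0$ then $e_0 \in M$, and since $M$ is $T^\ast$-invariant and $T^{\ast k} e_0 = a_1 \cdots a_k \, e_k$ with every $a_k \neq 0$, this forces $M = \ell^2(\N)$; the alternative puts $e_0 \in M^\perp$, and the same argument applied to $M^\perp$ gives $M = 0$. An irreducible operator with nonzero self-commutator is pure, so by the exponential-transform correspondence recalled in Section~\ref{sec:preliminaries}, $T$ has a compactly supported principal function $g = g_T : \C \to [0,1]$, which is the shade function asserted in the statement.

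Finally I would read off the recursion. The identity $T^{\ast k}\xi = a_1 \cdots a_k \, e_k$ shows the cyclic subspaces $H_k = \mathrm{span}\{\xi, \ldots, T^{\ast k}\xi\} = \mathrm{span}\{e_0, \ldots, e_k\}$ form a strictly increasing chain, so the exponential orthogonal polynomials $P_n$ are defined for all $n \geq 0$ and the system is complete; moreover, since both the $a_k$ and the leading coefficients $\gamma_n$ are positive, the orthonormal vectors $e_n = P_n(T^\ast)\xi$ are exactly the standard basis vectors. Hence the Hessenberg matrix $(h_{jk})$, $h_{jk} = \langle z P_k, P_j\rangle = \langle T^\ast e_k, e_j\rangle$, is literally the displayed matrix $T^\ast$, whose $k$-th column carries $a_{k+1}$ in row $k+1$ and $b_{k-2}$ in row $k-2$; the relation $z P_k = \sum_j h_{jk} P_j$ then becomes $z P_n(z) = a_{n+1} P_{n+1}(z) + b_{n-2} P_{n-2}(z)$ for all $n \geq 0$, with $P_{-1} = P_{-2} = 0$. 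The only genuine obstacle was the uniform boundedness of $(A_n)$, already dispatched above; the remaining steps are routine, the one delicate point being the sign bookkeeping in passing from $(A_n),(B_n)$ back to $(a_n),(b_n)$ together with the identification $e_n = P_n(T^\ast)\xi$.
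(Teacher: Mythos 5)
Your proposal follows the paper's own route: the proposition is stated there precisely as a summary of the preceding construction, and your write-up supplies the same wrap-up (positive square roots $a_n=\sqrt{A_n}$, $b_n=\sqrt{B_n}$, boundedness, the commutator identity, irreducibility, and the identification of the standard basis with $P_n(T^\ast)\xi$), so in substance you are reproving what the paper proves. One identity you use is false as written: $T^{\ast k}e_0 = a_1\cdots a_k\, e_k$ fails already for $k=3$, since $T^\ast e_2 = b_0 e_0 + a_3 e_3$ gives $T^{\ast 3}e_0 = a_1a_2b_0\, e_0 + a_1a_2a_3\, e_3$. The correct statement, which is what your argument actually needs, is that $T^{\ast k}e_0 \in \mathrm{span}\{e_0,\dots,e_k\}$ with $e_k$-coefficient $a_1\cdots a_k>0$ (immediate by induction from $T^\ast e_k = b_{k-2}e_{k-2}+a_{k+1}e_{k+1}$); this still yields the strictly increasing Krylov chain $H_k=\mathrm{span}\{e_0,\dots,e_k\}$, completeness, irreducibility, and $e_n=P_n(T^\ast)\xi$, so the slip is harmless once corrected.
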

   
   In this case $g$ is not the characteristic function of an ellipse, nor a quadrature domain.

   
   \subsection{Toeplitz operators} 

If we do not insist on only two non-zero diagonals, one can produce a similar counterexample as follows.
   Denote as before by $S$ the unilateral shift acting on $\ell^2(\N)$. Consider the linear operator
   $T = S^2 + S^\ast$. One verifies directly, for instance evaluating on the orthonormal basis $\{ e_0, e_1, e_2, \ldots\},$
   that $[S^\ast,S] = e_0 \otimes e_0$ and
   $$ [S^{\ast 2}, S^2] = e_0 \otimes e_0 + e_1 \otimes e_1.$$
   Hence
$$ 
[ T^\ast, T ] = [S^{\ast 2}, S^2] + [S,S^\ast] = e_1 \otimes e_1.
$$
   Thus $T$ is a hyponormal operator with rank-one self-commutator. And it satisfies our finite term relation constraint.
   Indeed,
   $$ T e_1 = e_3 + e_0,$$
   and
   $$ T^\ast e_1 = e_2, \ \ T^\ast e_2 = e_0 + e_3.$$
   That is
   $$ T e_1 = T^{\ast 2} e_1.$$
   One checks by recurrence that for all $k \geq 0,$ the vectors $e_{3k+1}$, $e_{3k+2}$, $e_{3k}+e_{3k+3}$ belong to the polynomial closure subspace $H_{\rm pol}$.
 We infer that the vectors $T^{\ast n} e_1$, $n \geq 0,$ generate the space $\ell^2(\N)$, which means that
   $T$ is an irreducible hyponormal operator and the exponential orthogonal polynomials are complete.
   
   Since $T$ is a Toeplitz operator with compact self-commutator, its essential spectrum coincides with the real algebraic curve
   $$ \Gamma = \{ z^2 + \frac{1}{z}; \  |z|=1\}$$
   and the spectrum of $T$ is the bounded set surrounded by $\Gamma$. As a matter of fact $\Gamma$ is a cycloid, in the form of three
   leaves, with a $2\pi/3$ rotational symmetry. Indeed, writing $z = e^{i \theta}$ for a point on the circle, the curve $\Gamma$
   is described by the parametric equations
   $$ x = \cos 2\theta + \cos \theta, \ \ y = \sin 2\theta - \sin \theta,$$
   or better for visualization:
   $$ x = 2 \cos \frac{3 \theta}{2} \cos \frac{\theta}{2}, \ \ y = 2 \cos \frac{3 \theta}{2} \sin \frac{\theta}{2},$$
   whence, in polar coordinates:
   $$ r = 2 |\cos \frac{3 \theta}{2}|,$$
   where $ \theta \in [0, 2 \pi]$.
   
   In conclusion the spectrum of the Toeplitz operator $T$ is a trifoil, with 3-fold rotational symmetry, consisting of three leaves intersecting at the origin.
   Their interiors are disjoint and carry a non-essential spectrum of Fredholm index equal to $-1$.
   
   The exponential orthogonal polynomials $P_n, \ n \geq 0,$ associated to $T$ satisfy therefore a four term relation,
   propagated from the first row
   $ T e_1 = e_3 + e_0 = T^{\ast 2} e_1$, that is:
   $$ 
   z P_k =    h_{k+1,k} P_{k+1} + h_{k,k} P_{k} + h_{k-1,k}P_{k-1} + h_{k-2,k} P_{k-2}, \ \ k \geq 2.
   $$
   Note that the associated Hessenberg matrix carries four non-zero diagonals, while the Toeplitz matrix representation of the same operator 
   has only two non-zero diagonals.

Note that one can equally consider the family of Toeplitz operators
$$ 
\alpha S^2 + \beta S^\ast, \ \ |\alpha| = |\beta|>0.
$$
If only aiming at producing a Toeplitz operator with rank-one self-commutator, one can look at
$$ 
V = \alpha S^{d+1} + \beta S^{\ast d},\ \ |\alpha| = |\beta|>0,
$$
where $d$ is a fixed positive integer. One proves as before that
$$ 
[V^\ast, V] = e_d \otimes e_d.
$$
Since 
$$ 
V e_d = e_{2d+1} + e_0,
$$ 
and
$$ 
(V^\ast)^N e_d = e_{(N+1)d} + {\rm lower \ order \ terms}, \ \ N \geq 0,
$$
we observe that in case $d>1$ there is no polynomial $G \in \C[z]$
with the property
$$ 
V e_d = G(V^\ast) e_d.
$$
Therefore, in this case, the Hessenberg matrix associated to the exponential orthogonal polynomials
is not finitely banded.


\section{Rotationally invariant shapes}\label{Rot}

In this section we specialize our analysis to the case of a rotationally invariant principal function
$$ 
g(\zeta) = g_T(\zeta) = g_T(|\zeta|), \ \ \zeta \in \C.
$$
In terms of the associate hyponormal operator $T$, this invariance means that for every
$\theta \in [0,2\pi)$ there exists a unitary transformation $U_\theta : H \longrightarrow H$ satisfying
$$ 
e^{i\theta} T = U_\theta T U_\theta^\ast.
$$

Under this assumption the exponential transform simplifies:
$$ 
E_g(z,w) = \exp [\frac{-1}{\pi} \int \frac{g(r) r dr d\theta}{(re^{i\theta} - z)(re^{-i\theta} - \ol{w})}] = 
$$
$$ 
\exp [ \int_0^\infty \frac{ g(r) d(r^2)}{r^2 - z\ol{w}}] = \exp [ \int_0^\infty \frac{ g(\sqrt{t}) dt}{t - z\ol{w}}].
$$
These formulas, obtained via power expansion at infinity, are valid for large values of $|z|$ and $|w|$.
The additive representation of analytic functions with positive imaginary part in the upper half plane \cite{AD}
yields a positive measure $\nu$ supported by the semiaxis, with the property:
$$ 
\exp [ \int_0^\infty \frac{ g(\sqrt{t}) d(t)}{t - z\ol{w}}] = 1 + \int_0^\infty \frac{d \nu(s)}{s-z\ol{w}}.
$$
Since the original ``phase shift" $g(\sqrt{t})$ has compact support, the representing measure $\nu$
is also compactly supported. And vice-versa, any such measure corresponds to a $g(\sqrt{t})$, see again
\cite{AD}.

On the other hand, the pure hyponormal operator $T$ with principal function equal to $T$ satisfies:
$$
1-\langle (T^\ast - \ol{w})^{-1}\xi, (T^\ast - \ol{z})^{-1}\xi \rangle = 1 + \int_0^\infty \frac{d \nu(s)}{s-z\ol{w}},
$$
whence
$$ 
-\sum_{k=0}^\infty \frac{ \| T^{\ast k} \xi\|^2}{z^{k+1} \ol{w}^{k+1}} = \int_0^\infty \frac{d \nu(s)}{s-z\ol{w}}.
$$
From here we identify $\| T^{\ast k} \xi\|^2$, $k \geq 0,$ with the Stieltjes sequence of moments of a positive measure
with compact support on the semi-axis. And conversely, a compactly supported Stieltjes measure $\nu$ gives rise
through the above formulas to a unique rotationally invariant irreducible hyponormal operator $T$ with rank-one self-commutator.

For instance the unit disk carrying the Lebesgue measure ($g = \chi_{\D} dA$) corresponds to the Dirac mass $\nu=\delta_0$.
Notice that in this case $T^\ast \xi = 0$ and it is the only situation when the space
$$ 
H_{\rm pol} = \bigvee_{n \geq 0}  T^{\ast n}\xi
$$
is finite dimensional.

Assuming the space  $H_{\rm pol}$ is infinite dimensional, we turn to the
associated exponential orthogonal polynomials.  Due to the orthogonality conditions
$$ 
\langle T^{\ast k}\xi, T^{\ast m}\xi \rangle = 0, \ \ k\ne m,
$$ 
these are precisely
$$ 
P_n(z) = \frac{z^n}{\| T^{\ast n} \xi \|^{1/2}},\ \ n \geq 0.
$$
Thus the associated Hessenberg matrix ${\mathcal H}$ has only the first subdiagonal non-zero, say
$$ 
\mathcal{H} =  \left( \begin{array}{ccccc}
                0& 0 &0&0& \ldots\\
                \gamma_0&0&0&0& \ldots \\
                0&\gamma_1&0& 0& \ldots \\
                0&0& \gamma_2 & 0 & \ddots\\
                \vdots& & \vdots& & \ddots
                \end{array} \right).
$$
where $\gamma_k >0$ for all $k\geq 0$. 

If in addition to the subspace $H_{\rm pol}$ being infinite dimensional it coincides with the full Hilbert space $H$, then
$\mathcal{H}$ represents $T^\ast$  with respect to the basis $e_n=P_n(T^\ast)\xi$. But since $[T^\ast, T] = \xi \otimes \xi$ this gives
$$ 
- \gamma_0^2 = \frac{ \langle [T^\ast, T] \xi, \xi \rangle }{\|\xi\|^2} >0,
$$
a contradiction.

In short we have proved that for rotationally invariant hyponormal operators in our class, the space $H_{\rm pol}$ spanned by the exponential orthogonal polynomials 
never coincides with the full space $H$. We elaborate two simple examples.

\begin{example}\label{ex:tdisk}
Let $g=t\chi_\D$ for some $0<t<1$. Then
$$
E_g(z,w)=1-\langle (T^\ast -\bar{w})^{-1}\xi,(T^\ast -\bar{z})^{-1}\xi\rangle
$$ 
is to be identified with 
$$
(E_\D(z,w))^t=(1-\frac{1}{z\bar{w}})^t,
$$
for large $z$, $w$. This gives, after power expansions,
$$
\langle T^{\ast k} \xi,  T^{\ast k} \xi\rangle=(-1)^k \binom{t}{k+1}= \frac{t(1-t)\dots (k-t)}{(k+1)!},
$$
$$
\langle T^{\ast k} \xi,  T^{\ast j} \xi\rangle=0, \quad k\ne j.
$$

Denoting
$$
c_k= 1/\sqrt{(-1)^k \binom{t}{k+1}}>0
$$ 
we obtain the canonical ON-basis $e_0, e_1, e_2, \dots$ of the space $H_{\rm pol}$ given by
$$
\begin{cases}
e_0=c_0\,\xi,\\
e_1=c_1 T^\ast\xi,\\
e_2=c_2 T^{\ast 2}\xi,\\ 
e_3=c_3 T^{\ast 3}\xi,\\
\dots.
\end{cases}
$$
In terms of $\{e_0, e_1, e_2, \dots\}$ the compression to the polynomial subspace   of the cohyponormal operator $T^\ast$ is represented by
Hessenberg's matrix
$$ 
\mathcal{H} =  \left( \begin{array}{ccccc}
                0& 0 &0&0& \ldots\\
                c_0/c_1&0&0&0& \ldots \\
                0&c_1/c_2&0& 0& \ldots \\
                0&0& c_2/c_3 & 0 & \ddots\\
                \vdots& & \vdots& & \ddots
                \end{array} \right).
$$
However this compression is not hyponormal itself, only the full operator is.

To find the measure $\nu$ in the additive representation of the Nevanlinna analytic function
$$ 
f(z) = (1- \frac{1}{z})^t, \ \ z \notin [0,1],
$$
we have to compute, at the level of distributions
$$ 
\pi \nu(x) = \lim_{ \epsilon \rightarrow 0} \im f(x+i\epsilon),
$$
see \cite{AD}. In the limit $t=1$ this gives $\nu=\delta_0$, as expected, while for $0<t<1$
$\nu$ is absolutely continuous with density
$$
\frac{d\nu}{dx}=\frac{\sin (\pi t)}{\pi}\, (\frac{1}{x}-1)^t , \quad 0<x<1.
$$

\end{example}

\begin{example}\label{ex:annulus} 
The annulus also offers another computable example. Specifically,
for $g = \chi_{\D_R} - \chi_{\D_r}$, $0<r<R$, we find:
$$
E_g(z,w)=\frac{E_{\D_R}(z,w)}{E_{\D_r}(z,w)}= \frac{1-\frac{R^2}{z\bar{w}}}{1-\frac{r^2}{z\bar{w}}}
=(1-\frac{R^2}{z\bar{w}})(1+\frac{r^2}{z\bar{w}}+ \frac{r^4}{z^2 \bar{w}^2}+ \dots)
$$
$$
=1- (R^2-r^2) (\frac{1}{z\bar{w}}+\frac{r^2}{z^2\bar{w}^2}+ \frac{r^4}{z^3 \bar{w}^3}+ \dots),
$$
to be identified with
$$
1-\langle (T^\ast -\bar{w})^{-1}\xi,(T^\ast -\bar{z})^{-1}\xi\rangle=-\sum_{k,j\geq 0}\frac{\langle T^{\ast j}\xi, T^{\ast k}\xi \rangle}{z^{k+1}\bar{w}^{j+1}}.
$$
It follows that the canonical ON-basis is
$$
\begin{cases}
e_0=\frac{1}{\sqrt{R^2-r^2}}\,\xi,\\
e_1=\frac{1}{r\sqrt{R^2-r^2}}\,\xi,\\
e_2=\frac{1}{r^2\sqrt{R^2-r^2}}\,\xi,\\ 
e_3=\frac{1}{r^3\sqrt{R^2-r^2}}\,\xi,\\
\dots,
\end{cases}
$$
with the compression of $T^\ast$ to the polynomial subspace $H_{\rm pol}$
represented by the matrix
\begin{equation}\label{matrixH}
\mathcal{H} =  \left( \begin{array}{ccccc}
                0& 0 &0&0& \ldots\\
                1/r&0&0&0& \ldots \\
                0&1/r&0& 0& \ldots \\
                0&0& 1/r & 0 & \ddots\\
                \vdots& & \vdots& & \ddots
                \end{array} \right).
\end{equation}

In this case the additive representation of the corresponding Nevanlinna function
$$ 
f(z) = \frac{z-{R^2}}{z-{r^2}}
$$ 
is straightforward:
$$ 
f(z) = 1 + \int_\R \frac{d \nu(s)}{s-z},
$$ 
where $d\nu = (R^2-r^2)\delta_{r^2}.$

We also validate in this particular situation the general fact that Hessenberg matrix $\mathcal H$ together with the positive number
$ \| \xi \|^2 = \frac{R^2 - r^2}{\pi}$ determines the original shape. Obviously knowledge of the norm of $\xi$ is really needed here since
the matrix (\ref{matrixH}) gives no information about the radius $R$. 

Related to the latter statement is the fact that $H_{\rm pol}$ is strictly smaller than $H$, despite it being infinite dimensional. In fact, if
the two spaces had been equal, then $\mathcal{H}$ would have fully represented $T^\ast$ and all information of $\xi$ would have been built into it. 
In addition, $\mathcal{H}$ would have been cohyponormal, while the matrix in (\ref{matrixH}) clearly is (strictly) subnormal (and hyponormal).

One can see these features clearly in functional models of $H$, discussed in \cite{GP}, \cite{GP-resolvent}. Such models are generated by
the monomials $z^k\ol{z}^j$, $k,j\geq 0$, considered as functions defined on the spectrum of the operator. But these monomials are far from being 
linearly independent in general. For example, when $\dim H_{\rm pol}<\infty$ then there is a monic (say) polynomial of $P_d$ of degree $d=\dim H_{\rm pol}$
such $P_d(T^\ast)\xi=0$ (cf. Section~\ref{sec:completeness}), which means 
that the relation $P_d(z)=0$ holds in the relevant functional space. In this case ${\rm int\,}\sigma$ is a quadrature domain with quadrature nodes at the zeros of $P_d$. 

When $\sigma$ is the characteristic function of an ellipse it turns out that a relation $\ol{z}=a z$ holds for some constant $a\in\C$ (see Example~4.2 in \cite{GP-resolvent}),
and as for the present example, with the annulus, we have a relation
\begin{equation}\label{barzz}
\ol{z}=\frac{R^2+r^2}{2z}.
\end{equation}
As explained in \cite{GP-resolvent} (Theorem~4.1 there) such a relation comes out on exhibiting a continuous function $f$
in $\C$ which vanishes outside $\sigma$ and which in ${\rm int\,}\sigma$ satisfies
$$
\frac{\partial f}{\partial \ol{z}}= \ol{z}-\frac{R^2+r^2}{2z}.
$$
In the present case this function is
$$
f(z)=\frac{(|z|^2-r^2)(|z^2|-R^2)}{2z^2}.
$$

The relation (\ref{barzz}) means that instead of letting the functional space be generated by the two parameter family $z^k\ol{z}^j$ ($k,j\geq 0$),
it is enough to use the sequence $z^n$, with $n$ running through all integers (positive and negative). And these functions are then 
linearly independent. This is in full agreement with the general result (\cite{GP}, Theorem~3.1) saying 
that the analytic functions on $\sigma$ are densely injected in the functional model of $H$, in the non-quadrature domain case.

\end{example}


\section{Harmonic moments and Hele-Shaw flow}

The first column of the power moments, $\{a_{j0}, \, j\geq 0\}$, namely the ``{harmonic}'' moments, coincides with 
the first column of the exponential moments, $\{b_{j0},\, j\geq 0\}$, as is easily seen from (\ref{ab}). Hele-Shaw flow
moving boundary problems, or Laplacian growth (see \cite{Varchenko-Etingof-1992, Gustafsson-Teodorescu-Vasiliev-2014} for general theory), are 
characterized by these moments changing according to some simple law.
One case is the squeezing version of Hele-Shaw flow, meaning that a viscous fluid blob, represented by the principal
function $g$, is confined between two parallel plates  and one simply squeezes the plates together. The dynamics is then
that the harmonic moments decrease uniformly with respect to time $t$, say as
$$
a_{j0}(t)=b_{j0}(t)=C e^{-t}, \quad 0\leq t<\infty.
$$

Of major interest is also the inverse process, letting $t\to -\infty$ above whenever this is possible. This backward evolution is highly
unstable, but when it is successful it leads in the limit to potential theoretic skeletons, or ``mother bodies'', as discussed for
example in \cite{Savina-Sternin-Shatalov-2005, GP}. 

It is quite reasonable to consider principal functions $g=g(t)$  of fairly general form (subject only to $0\leq g\leq 1$). Indeed,
the equations describing Hele-Shaw flow are equivalent to those describing porous medium flow, namely
Darcy's law, and if $0<g<1$ in some region it simply means that the porous medium is not fully saturated with fluid there. 
Compare Example~\ref{ex:tdisk}.

However, the harmonic moments describe the dynamics accurately only when $g$ is of the form $\chi_\Omega$ 
with $\Omega$ a simply connected domain, otherwise they do not characterize $g$ completely. With $g$ of this form,
the most studied version of Hele-Shaw flow is that of injection or suction of fluid at one point, say the origin. In that case the dynamical
law is that $a_{00}$ changes linearly with time, while $a_{10}, a_{20},\dots$ remain fixed. This fits well into the recursion
algorithm in Corollary~\ref{cor:recursion} since the left member $b_{m+1,0}=a_{m+1,0}$ in (\ref{first-column}) then will be conserved
in time, as well as the factors $b_{0,n-j-1}=a_{0,n-j-1}$ in (\ref{all-columns}) (for $0\leq j\leq n-2$). 

It is however not clear at present how much of help this is for constructing solutions. Knowledge of all the $b_{kj}$ is equivalent to
knowledge of the complete exponential transform, and with the exponential transform at hand the boundary of the fluid domain is immediately obtained as
$\partial\Omega=\{z\in \C: E(z,z)=0\}$. Here $E(z,w)$ refers to the restriction of the exponential transform to the complement of
the spectrum $\ol\Omega$ and the analytic continuation of that across $\partial\Omega$ (known to exist in the present situation).
But the step from the first column of $(b_{kj})$ to the full matrix is a major challenge (and possible in principle only for  $g$ of the 
form $\chi_\Omega$ with $\Omega$ simply connected). 

One may also consider Hele-Shaw flow in exterior domains, containing the point of infinity. In this case one usually considers
suction at infinity, which means, in our setting, that the spectrum $\sigma$ of the hyponormal operator grows. This spectrum,
which hence is the complement of the fluid region in the Hele-Shaw model,
may represent, in other physical and mathematical models,  an electronic droplet in a Coulomb gas or a quantum Hall regime,
an aggregate of eigenvalues for a normal random matrix ensemble, a growing crystal in a phase transition model,
occupied sites for DLA (diffusion limited aggregation), etc. 
These subjects also connect to theories of integrable hierarchies, and here a superficial similarity with the 
material of the present paper is the occurrence of a pair of ``Lax'' operators, adjoint to each other and satisfying
a ``string equation'', actually a form of Heisenberg's uncertainty relation, which reminds of our commutation relation 
(\ref{TstarT}). In addition, the Lax operators are naturally represented  by Hessenberg matrices.   
We refer to  \cite{Wiegmann-Zabrodin-2000, Teodorescu-Bettelheim-Agam-Zabrodin-Wiegmann-2005, Mineev-Putinar-Teodurescu-2008} 
for some details on the above matters.

In the above inverse geometry, having the Hele-Shaw fluid in the complement of the spectrum, the usual harmonic moments $a_{j0}$
are no longer preserved. Instead it is the harmonic moments of the exterior domain that behave well.
In the notation of the mentioned papers related to integrable hierarchies these are
$$
t_k=\frac{1}{2\pi i k}\int_{\partial\sigma} z^{-k}\ol{z}dz \quad k=1,2,3,\dots,
$$
and they are interpreted as ``generalized times''. Ordinary time is then the moment $t=t_0=a_{00}$.

A good example is the ellipse, which is singled out in Theorem~\ref{ellipse} as the unique configuration admitting a three term relation for the 
exponential polynomials. An ellipse is characterized by $5$ real parameters, and these can be taken to be $t_0, t_1, t_2$ ($t_0$ is necessarily real).
The remaining exterior moments vanish: $t_3=t_4=\dots =0$ for any ellipse.

In the Hele-Shaw problem the ellipse shows up as a preserved shape in two ways.  One is with having the fluid in the exterior domain, with
injection or suction at infinity. This exterior problem means that the boundary moves with speed proportional to the density
of the equilibrium measure (for the interior domain), and in the ellipse case, taking the ellipses to be centered at the origin for simplicity, 
the resulting dynamics will be a family of  homothetic ellipses, defined via scale change in the complex plane. 
Under the evolution  $t_0$ equals the area of the ellipse divided by $\pi$, while $t_1$ and $t_2$ remain fixed (like $t_3=t_4=\dots=0$ of course).
This property of homothetic ellipses is related to the discovery of Newton that the shell between two such ellipses produces no gravitational field in the cavity inside the shell.

When the fluid is inside the ellipse it is the squeezing version of Hele-Shaw flow which preserves the elliptic shape. The family of
ellipses is however different, this time it is the family of confocal ellipses, i.e., the one-parameter family obtained by keeping the foci fixed.
This is one of few cases in which the dynamics of the squeezing problem works out well in the full range $-\infty<t<\infty$. In the limit $t\to-\infty$ one obtains 
the (unique in this case) mother body,  which is a measure supported by the segment between the foci.
In the standard case with the foci at $\pm 1$ the density of the measure with respect to arclength along the $x$-axis is proportional to $\sqrt{1-x^2}$.
For details on the above statements for ellipses we refer to \cite{Shapiro-1992, Varchenko-Etingof-1992, Khavinson-Lundberg-2018}.

It remains open how the three term relation for ellipses fits into the above pictures, but it has at least been proved that the zeros of the exponential polynomials
are attracted by the mother body: the counting measures of the zeros have a weak star limit (as the degrees of the polynomials tend to infinity), 
and this is a measure which has the same support as the mother body (however with a different density). See Proposition~7.1 in \cite{GP}.


\end{document}